\documentclass[final]{siamart250211}

\usepackage{algorithm, algpseudocode, algorithmicx}
\usepackage{amsopn, amsmath, amscd, amssymb, bm, mathrsfs, mathtools}
\usepackage{array, booktabs, makecell} 
\newcolumntype{P}[1]{>{\centering\arraybackslash}p{#1}}
\newcolumntype{M}[1]{>{\centering\arraybackslash}m{#1}}
\usepackage[american]{babel}
\usepackage[ddmmyy,hhmmss]{datetime}
\usepackage{enumerate}
\usepackage{graphicx}
\usepackage{pdflscape}
\usepackage{hyperref}
\usepackage{url}
\usepackage{setspace}
\usepackage{soul}
\usepackage[backgroundcolor=violet!60,textcolor=white,linecolor=gray]{todonotes}

\usepackage{textcomp}

\usepackage[mathscr]{euscript}

\let\trace\relax
\DeclareMathOperator{\trace}{trace}
\DeclareMathOperator{\diag}{diag}

\DeclareMathOperator{\spn}{span}	

\DeclareMathOperator{\gsstep}{\texttt{gs\_step}}
\DeclareMathOperator{\bgsstep}{\texttt{bgs\_step}}
\newcommand{\qreq}{\overset{\mbox{\normalfont\tiny\sffamily QR}}{=}}

\usepackage{xspace}		

\newcommand{\spEK}{\mathscr{E}}				
\newcommand{\spK}{\mathscr{K}}				
\newcommand{\spQK}{\mathscr{Q}}				
\newcommand{\spR}{\mathbb{R}}				

\newcommand{\vb}{\bm{b}}

\newcommand{\ve}{\bm{e}}

\newcommand{\vq}{\bm{q}}
\newcommand{\vr}{\bm{r}}
\newcommand{\vrhat}{\widehat{\vr}}
\newcommand{\vs}{\bm{s}}

\newcommand{\vv}{\bm{v}}

\newcommand{\vw}{\bm{w}}
\newcommand{\vx}{\bm{x}}

\newcommand{\vB}{\bm{B}}

\newcommand{\vQ}{\bm{Q}}

\newcommand{\vS}{\bm{S}}

\newcommand{\vV}{\bm{V}}

\newcommand{\vW}{\bm{W}}
\newcommand{\vX}{\bm{X}}

\newcommand{\vY}{\bm{Y}}

\newcommand{\Hunder}{\underline{H}}

\newcommand{\Tunder}{\underline{T}}

\newcommand{\VV}{\mathcal{V}}

\newcommand{\bVV}{\bm{\mathcal{V}}}

\newcommand{\inv}{{-1}}

\newcommand{\fAb}{f(A)\vb}

\newcommand{\ip}[2]{\langle #1,#2 \rangle}

\newcommand{\norm}[1]{\left\lVert#1\right\rVert}

\makeatletter
\newsavebox{\@brx}
\newcommand{\llangle}[1][]{\savebox{\@brx}{\(\m@th{#1\langle}\)}%
	\mathopen{\copy\@brx\kern-0.5\wd\@brx\usebox{\@brx}}}
\newcommand{\rrangle}[1][]{\savebox{\@brx}{\(\m@th{#1\rangle}\)}%
	\mathclose{\copy\@brx\kern-0.5\wd\@brx\usebox{\@brx}}}
\makeatother

\newcommand{\gl}{\mbox{\normalfont\tiny\sffamily gl}}
\newcommand{\cl}{\mbox{\normalfont\tiny\sffamily cl}}
\newcommand{\IPgl}[2]{{\llangle #1,#2 \rrangle}_{\gl}}
\newcommand{\IPcl}[2]{{\llangle #1,#2 \rrangle}_{\cl}}

\newcommand{\bigO}{\mathcal{O}}

\newcommand{\M}{{\mbox{\normalfont\tiny\sffamily M}}}
\renewcommand{\S}{{\mbox{\normalfont\tiny\sffamily S}}}

\ifpdf
  \DeclareGraphicsExtensions{.eps,.pdf,.png,.jpg}
\else
  \DeclareGraphicsExtensions{.eps}
\fi

\newsiamremark{remark}{Remark}
\newsiamremark{hypothesis}{Hypothesis}
\crefname{hypothesis}{Hypothesis}{Hypotheses}
\newsiamthm{claim}{Claim}
\newsiamremark{fact}{Fact}
\crefname{fact}{Fact}{Facts}

\newsiamremark{example}{Example}

\Crefname{subsection}{Section}{Sections}

\headers{Stability of basis construction in extended Krylov methods}{K.~Lund \& M.~Schweitzer}

\title{On the stability of basis construction in extended Krylov subspace methods\thanks{Submitted to the editors \today.}
\funding{London Mathematical Society, Research in Pairs program, Reference: 42415; DFG project 566049107, ``The $f(A)b$ulous scientific network on matrix functions and exponential integrators''}}

\author{Kathryn Lund\thanks{Computational Mathematics Theme, Building R71, STFC Rutherford Appleton Laboratory, Harwell Oxford, Didcot, Oxfordshire, OX11 0QX, United Kingdom. \email{kathryn.lund@stfc.ac.uk}}
\and Marcel Schweitzer\thanks{School of Mathematics and Natural Sciences, Bergische Universit\"at Wuppertal, 42097 Wuppertal, Germany.  \email{marcel@uni-wuppertal.de}}
}

\ifpdf
\hypersetup{
  pdftitle={On the stability of basis construction in extended Krylov subspace methods},
  pdfauthor={K.~Lund, M.~Schweitzer}
}
\fi

\begin{document}

\maketitle

\begin{abstract}
    Extended Krylov subspace methods (EKSMs) are a well known approach for approximating the action of a matrix function on a vector $f(A)\mathbf{b}$, and their block versions are often used for solving Lyapunov and Sylvester matrix equations.  EKSMs produce Laurent approximations to the underlying operator, which are generally more accurate per iteration than polynomial KSMs, without requiring a pole selection procedure as rational KSMs do.  At the same time, due to the expansion of the search space by not only $A$ but also by its inverse, a number of implementation choices must be made that can affect the stability of the algorithm in floating-point arithmetic.  We concentrate on two variants that often appear in the literature and in code, based on whether new basis vectors are built via what we denote as ``mixed channels" or ``split channels."  We characterize their loss of orthogonality with an easily computable metric and prove that when $A$ is symmetric positive definite, mixed channels have better orthogonality than split channels; however, numerical experiments demonstrate that straightforward predictions are not possible for indefinite or nonsymmetric matrices.  We also provide a modular MATLAB toolbox, \texttt{ektoolbox}, that contains new versions of EKSMs with low-synchronization Gram-Schmidt backbones.
\end{abstract}

\begin{keywords}
    extended Krylov subspace methods, matrix function times a vector, rounding error, numerical stability, loss of orthogonality
\end{keywords}

\begin{MSCcodes}
    65F25, 65G50, 65F60, 65F10
\end{MSCcodes}

\section{Introduction} \label{sec:intro}

Krylov subspace methods (KSMs) are ubiquitous in the solution of linear systems, matrix functions, matrix equations, and eigenvalue solvers.  For matrix functions in particular, rational Krylov subspace methods (RKSMs) have proven to be highly efficient and accurate, especially relative to polynomial KSMs, assuming there is a straightforward method for obtaining suitable poles \cite{Gut10}.  As finding optimal poles and solving a sequence of shifted linear systems can themselves be computationally challenging tasks, it is common to look to extended Krylov subspace methods (EKSMs) as a balance between polynomial and rational KSMs.  Furthermore, in the special case that the operator is unitary, extended Krylov methods are natural to implement, as inverting the operator is trivial \cite{RinV26} (while the same is not true for shifted unitary operators).  

Given a square matrix $A \in \spR^{n \times n}$ and a vector $\vb \in \spR^n$, a (polynomial) Krylov subspace of dimension $m$ is defined as
\[
\spK_m(A, \vb) = \{\vb, A \vb, A^2\vb, \ldots, A^{m-1}\vb\}.
\]
Meanwhile a rational Krylov subspace is defined as
\[
\spQK_m(A, \vb) := q(A) \spK_m(A, \vb),
\]
for a rational function $q(z) = \prod_{i = 1}^p \frac{1}{z - \xi_i}$ with poles $\xi_i \in \spR$, $i = 1, \ldots, p$.  EKSMs force $\xi_i \in \{0, \infty\}$, for all $i = 1, \ldots, p$.  We define the extended Krylov subspace of orders $(\ell, m)$ as
\begin{equation} \label{eq:ek_subspace}
    \spEK_{\ell,m}(A, \vb) := \spn\{A^{-\ell+1}\vb, \ldots, A^{-2}\vb, A^\inv \vb, \vb, A\vb, A^2\vb, \ldots, A^{m-1}\vb\},
\end{equation}
which comprises $\ell + m - 1$ basis vectors. In practice one may want fewer inverses than powers of $A$ (or vice versa), and indeed this has been investigated by Jagels and Reichel \cite{JagR09, JagR11}. To narrow the scope of this investigation, we will assume the number of applications of each operator is the same, i.e., $\ell = m$.

The space $\spEK_{m,m}(A, \vb)$ takes on a rather unique shape relative to either $\spK_m(A,\vb)$ or $\spQK_m(A,\vb)$ and requires unique implementation decisions, such as the following:
\begin{itemize}
    \item In what order should powers of $A$, respectively $A^{-1}$, be added?
    \item Should vectors be added in batches, or one at a time?
    \item How should the projection of $A$ onto the subspace be computed?
    \item How should linear systems with $A$ be solved (and how accurately)?
\end{itemize}
We analytically address the first two of these points by considering the effects of rounding error; we will numerically investigate the third.  The fourth point remains for future work but necessarily depends on the outcome of the previous questions.  To simplify the analysis for now, we will assume that we have access to a highly accurate LU, LDL, or Cholesky factorization of $A$ that can be stored and easily accessed from memory.

We classify EKSM variants as either \emph{mixed channel} or \emph{split channel} depending on how new vectors are generated.  The mixed channel approach applies $A$ (respectively $A^{-1}$) to the most recently generated basis vector.  Meanwhile the split channel approach applies $A^{-1}$ to the last generated odd vector and $A$ to last even vector (or the other way around, as long as the same operator acts exclusively on odd or even vectors).  The split channel approach further allows for batching the generation of new vectors, i.e., using block Gram-Schmidt on the $A$ and $A^{-1}$ directions simultaneously.

Viewing EKSMs through these lenses leads to a number of interesting results.  We begin by categorizing variants in the literature in \Cref{sec:variants}.  To kick off the analysis in \Cref{sec:stability}, we show that mixed- and split-channel approaches not only span the same space but produce essentially the same basis, up to signs (\Cref{thm:same_span}); furthermore, for symmetric $A$, mixed approaches have a 3-term recurrence while split approaches satisfy a 5-term recurrence (\Cref{thm:short_recurrences}).  Through an easily computable metric, we prove that, regardless of the choice of orthogonalization backbone, mixed-channel methods have predictably better orthogonality than split-channel methods for symmetric positive definite $A$ (\Cref{thm:spd_mcil_vs_scil}).  Numerical experiments in \Cref{sec:experiments} are carried out through a modular MATLAB toolbox, \texttt{ektoolbox}, which contains configurations of EKSMs not previously implemented, such as low-synchronization Gram-Schmidt backbones and some block variants.  These experiments demonstrate that it is not possible to predict which approach is better for $A$ indefinite or nonsymmetric, especially when the error of computing $f(A)\vb$ is considered alongside the loss of orthogonality.  We summarize and outline future directions in \Cref{sec:conclusions} and collect technical proofs and lemmas supporting the main theorems in \Cref{appendix:proofs}.

We employ standard numerical linear algebra notation throughout: uppercase Roman letters ($A, H, K$) denote matrices; lowercase Roman letters denote matrix entries ($a_{ij}, h_{ij}, k_{ij}$); bold Roman letters denote vectors or concatenations of vectors ($\vv$ or $\vV$, respectively); and the occasional Greek letter denotes special constants ($\beta, \gamma, \rho, \sigma$).  Unless otherwise stated, we use the Euclidean inner product $\ip{\cdot}{\cdot}$ and 2-norm $\norm{\cdot}$.  All in-text code can be executed in MATLAB.

\section{Implementation variants} \label{sec:variants}
EKSMs were introduced by Druskin and Knizhnerman for computing Markov functions $f$ of symmetric matrices \cite{DruK98}.  As RKSMs were already established in the literature, EKSMs were regarded as a special case in which only the poles $0$ and $\infty$ are considered, effectively leading to the construction of a subspace with alternating applications of an operator $A$ and its inverse. Since then, EKSMs have been used to compute other matrix functions \cite{KniS10}, and to solve Lyapunov and Sylvester matrix equations \cite{Sim07, Hey10, KniS11}, families of shifted linear systems \cite{AlG25, Sim10}, and parametric problems \cite{PalTNetal25, Sim10}.  EKSMs have been thoroughly investigated for symmetric positive definite matrices, revealing connections to orthogonal Laurent polynomials and short-term recurrences \cite{MerV15, JagR09, JagR11}.  Block variants have also been proposed \cite{BenEJ20, BenJS17, SadBSetal20}.  Convergence analysis has also been addressed, albeit without accounting for floating-point error \cite{GocG13, Sch15, Sch16}.

Building an extended Krylov subspace means constructing an orthonormal basis $\vV_{2m-1} \in \spR^{n \times 2m-1}$ spanning \eqref{eq:ek_subspace} and coefficient matrices $K_{2m-1} \in \spR^{2m-1 \times 2m-1}$ and $\Hunder_{2m} \in \spR^{2m \times 2m-1}$ such that an extended Arnoldi relation holds:
\begin{equation} \label{eq:ek_arnoldi_hk}
    A \vV_{2m-1} K_{2m-1} = \vV_{2m} \Hunder_{2m},
    \mbox{ where }
    \Hunder_{2m} = \begin{bmatrix} H_{2m-1} \\ h_{2m,2m-1} \ve_{2m-1}^T \end{bmatrix}.
\end{equation}
Other treatments of EKSMs focus instead on an Arnoldi relation of a more familiar form:
\begin{equation} \label{eq:ek_arnoldi_t}
    A \vV_{2m-1} = \vV_{2m} \Tunder_{2m},
    \mbox{ where }
    \Tunder_{2m} = \begin{bmatrix} T_{2m-1} \\ t_{2m,2m-1} \ve_{2m-1}^T \end{bmatrix}.
\end{equation}
We focus on the coefficients $\Hunder_{2m}$ and $K_{2m-1}$ separately for the moment, but look at different ways $\Tunder_{2m}$ can be constructed, as well as that construction's effect on numerical stability in \Cref{sec:experiments}.

As the order of vectors in the span~\eqref{eq:ek_subspace} is irrelevant from a purely mathematical point of view, there are a number of choices to make when constructing the relation~\eqref{eq:ek_arnoldi_hk}.  First of all, we assume that the first orthogonalization step is performed on $\vb$ alone, i.e., $\vv_1 \beta = \vb$, where $\vv_1$ has norm $1$, and $\beta = \norm{\vb}$.  For the next step, we must decide
\begin{itemize}
    \item whether $A$ or $A^\inv$ is applied first (i.e., is the ``leading" operator), and
    \item whether operators only act on basis vectors in their own ``channel", or on the most recently computed one.
\end{itemize}
These options lead to four variants:
\begin{table}[H]
    \centering
    \begin{tabular}{c|c|c}
                        & power-leading & inverse-leading   \\ \hline
         mixed channel  & MCPL          & MCIL              \\ \hline
         split channel  & SCPL          & SCIL
    \end{tabular}
    \caption{Table of four possible extended Krylov implementation variants}
    \label{tab:ek_variants}
\end{table}

In the following, we will make use of the shorthand $\vq r \qreq \vw$, to signify that $\norm{\vq} = 1$, $r = \norm{\vw}$, and $\vq = \vw/r$.  We will also make use of a subroutine $\gsstep$ (``Gram-Schmidt step") such that $[\vq,\, r,\, \vs] = \gsstep(\vV, \vw)$ is equivalent to
\[
    \vq \, r \qreq \vw - \vV \vs, \mbox{ and } \vs = \vV^T \vw.
\]
Similarly, $[\vQ,\, R,\, \vS] = \bgsstep(\bVV, \vW)$ (``Block Gram-Schmidt step") is equivalent to
\[
    \vQ \, R \qreq \vW - \bVV \vS, \mbox{ and } \vS = \bVV^T \vW,
\]
where $\qreq$ denotes an economic QR decomposition with the same dimensions as $\vW$.  For the moment, we do not specify the details of $\bgsstep$ but note that it could be implemented by a variety of block Gram-Schmidt routines \cite{CarLRetal22, Lun23}.  Note that (block) Lanczos variants trivially fall into one of the categories of \Cref{tab:ek_variants}, as Lanczos is mathematically equivalent to Gram-Schmidt.

Whether $A$ or $A^\inv$ is applied first does not strongly influence the resulting numerical stability of the algorithm.  All the same, we briefly look at the two different approaches in the following two sections.

\subsection{Power-leading variants} \label{sec:power_leading}
We start by examining a mixed-channel, power-leading (MCPL) approach.  The first basis vector is obtained by normalizing $\vb$: $\vv_1 \beta \qreq \vb$.  Next, we apply $A$ to $\vv_1$ and apply a $\gsstep$ to obtain $\vv_2$:
\begin{equation} \label{line:mcpl-v2}
    [\vv_2,\, h_{21},\, h_{11}] = \gsstep(\vv_1, A \vv_1) \quad\Leftrightarrow\quad A \vv_1  = \vV_2 H_{1:2,1} 
\end{equation}
To obtain $\vv_3$, we now apply $A^\inv$ to $\vv_2$ and another $\gsstep$:
\begin{equation} \label{line:mcpl-v3}
	[\vv_3,\, k_{32},\, K_{1:2,2}] = \gsstep(\vV_2, A^\inv \vv_2) \quad\Leftrightarrow\quad A \vV_3 K_{1:3,2} = \vv_2 
\end{equation}
Combining \eqref{line:mcpl-v2} and \eqref{line:mcpl-v3} into a single, Arnoldi-like relation leads to
\begin{equation} \label{eq:mcpl-ek-arnoldi}
	A \vV_3 \underbrace{\begin{bmatrix}
		1 	& k_{12} \\
			& k_{22} \\
			& k_{32}
	\end{bmatrix}}_{=: \underline{K}_3}
	=
	\vV_2 \underbrace{\begin{bmatrix}
		h_{11}	&	\\
		h_{21}	& 1
	\end{bmatrix}}_{=: H_2}.
\end{equation}
The relation~\eqref{eq:mcpl-ek-arnoldi} is perhaps better viewed as an Arnoldi relation for the inverse operator:
\begin{equation*}
	A^{\inv} \vV_2 H_2 = \vV_3 \underline{K}_3,
\end{equation*}
While this formulation may be useful for some applications where relations in $A^\inv$ are required, we will not consider power-leading variants further in the present work.
    
\subsection{Inverse-leading variants} \label{sec:inverse_leading}
Constructing relations for inverse-leading variants is similar to the process in \Cref{sec:power_leading}.  In lieu of the derivation, we provide \Cref{alg:mcil-ek-arnoldi} and \Cref{alg:scil-ek-arnoldi}, which summarize the Arnoldi procedure for MCIL and SCIL, respectively.  Equations~\eqref{eq:K-mcil-ek-arnoldi} and \eqref{eq:H-mcil-ek-arnoldi} show the general structure of $K_{2m-1}$ and $\Hunder_{2m}$ for MCIL.  Note that empty space represents zeros, and both matrices are upper Hessenberg.

\begin{equation}
    K_{2m-1}
    =
    \begin{bmatrix}
        k_{11}  &   & k_{13}    &   & k_{15}    &       & \cdots    & k_{1,2m-2}    &   \\
        k_{21}  & 1 & k_{23}    &   & k_{25}    &       &           &               &   \\
                &   & k_{33}    &   & k_{35}    &       &           &               &   \\
                &   & k_{43}    & 1 & k_{45}    &       &           &               &   \\
                &   &           &   & k_{55}    &       &           & \vdots        &   \\
                &   &           &   & k_{65}    & 1     &           &               &   \\
                &   &           &   &           & \ddots& \ddots    &               &   \\
                &   &           &   &           &       & \ddots    & k_{2m-2,2m-2} &   \\
                &   &           &   &           &       &           & k_{2m-1,2m-2} & 1
    \end{bmatrix}  \label{eq:K-mcil-ek-arnoldi}
\end{equation}
\begin{equation}
    \Hunder_{2m}
    =
    \begin{bmatrix}
        1   & h_{12}  &   & h_{14}    &   & h_{16}  & \cdots    & \cdots    & h_{1,2m-1}  \\
            & h_{22}  &   & h_{24}    &   & h_{26}  &           &           &           \\
            & h_{32}  & 1 & h_{34}    &   & h_{36}  &           &           &           \\
            &         &   & h_{44}    &   & h_{46}  &           &           &           \\
            &         &   & h_{54}    & 1 & h_{56}  &           &           & \vdots    \\
            &         &   &           &   & h_{66}  &           &           &           \\
            &         &   &           &   & h_{76}  & \ddots    &           &           \\
            &         &   &           &   &         & \ddots    & 1         &           \\
            &         &   &           &   &         &           & \ddots    & h_{2m-1,2m-1} \\
            &         &   &           &   &         &           &           & h_{2m,2m-1}
    \end{bmatrix} \label{eq:H-mcil-ek-arnoldi}
\end{equation}

\begin{algorithm}[htbp!]
	\caption{MCIL-EK-Arnoldi} \label{alg:mcil-ek-arnoldi}
	\begin{algorithmic}[1]
		\State{Given $A$, $\vb$, $m$}
		\State{$\vv_1 \beta \qreq \vb$}
		\For{$j = 1, \ldots, m$}
    		\State{$\vw_{j.1} = A^\inv \vv_{2j-1}$} \label{line:mcil-w-j.1}
    		\State{$[\vv_{2j},\, k_{2j,2j-1},\, K_{1:2j-1,2j-1}] = \gsstep(\vV_{2j-1}, \vw_{j.1})$} \label{line:mcil-gsstep-j.1}
    		\State{$h_{2j-1,2j-1} = 1$}
    		\State{$\vw_{j.2} = A \vv_{2j}$} \label{line:mcil-w-j.2}
    		\State{$[\vv_{2j+1},\, h_{2j+1,2j},\, H_{1:2j,2j}] = \gsstep(\vV_{2j}, \vw_{j.2})$}  \label{line:mcil-gsstep-j.2}
    		\State{$k_{2j,2j} = 1$}
		\EndFor
		\State \Return{$\vV_{2m}, K_{2m-1}, \Hunder_{2m}$ satisfying \eqref{eq:ek_arnoldi_hk}}
	\end{algorithmic}
\end{algorithm}

\Cref{alg:scil-ek-arnoldi} shows how to ``split" the channels, in the sense that, after the first three basis vectors are generated, $A^\inv$ is only ever applied to basis vectors of even index, and $A$ to those of odd index.  Equations~\eqref{eq:K-scil-ek-arnoldi} and \eqref{eq:H-scil-ek-arnoldi} demonstrate what $K_{2m-1}$ and $\Hunder_{2m}$ look like for SCIL.

\begin{equation}
    K_{2m-1}
    =
    \begin{bmatrix}
        k_{11}  & 1 & k_{13}    &   & k_{15}    &       & \cdots    & k_{1,2m-2}    &   \\
        k_{21}  &   & k_{23}    &   & k_{25}    &       &           &               &   \\
                &   & k_{33}    & 1 & k_{35}    &       &           &               &   \\
                &   & k_{43}    &   & k_{45}    &       &           & \vdots        &   \\
                &   &           &   & k_{55}    & 1     &           &               &   \\
                &   &           &   & k_{65}    & \ddots& \ddots    &               &   \\
                &   &           &   &           & \ddots& \ddots    & k_{2m-3,2m-2} &   \\
                &   &           &   &           &       & \ddots    & k_{2m-2,2m-2} & 1 \\
                &   &           &   &           &       &           & k_{2m-1,2m-2}   & 
    \end{bmatrix}  \label{eq:K-scil-ek-arnoldi}
\end{equation}
\begin{equation}
\Hunder_{2m}
    =
    \begin{bmatrix}
        1   & h_{12}  &   & h_{14}    &   & h_{16}  & \cdots    & \cdots    & h_{1,2m-1}    \\
            & h_{22}  & 1 & h_{24}    &   & h_{26}  &           &           &               \\
            & h_{32}  &   & h_{34}    &   & h_{36}  &           &           &               \\
            &         &   & h_{44}    & 1 & h_{46}  &           &           & \vdots        \\
            &         &   & h_{54}    &   & h_{56}  &           &           &               \\
            &         &   &           &   & h_{66}  & \ddots    &           &               \\
            &         &   &           &   & h_{76}  & \ddots    & 1         & h_{2m-3,2m-1} \\
            &         &   &           &   &         & \ddots    & \ddots    & h_{2m-2,2m-1} \\
            &         &   &           &   &         &           & \ddots    & h_{2m-1,2m-1} \\
            &         &   &           &   &         &           &           & h_{2m,2m-1}
    \end{bmatrix} \label{eq:H-scil-ek-arnoldi}
\end{equation}

\begin{algorithm}[htbp!]
	\caption{SCIL-EK-Arnoldi} \label{alg:scil-ek-arnoldi}
	\begin{algorithmic}[1]
        \State{Given $A$, $\vb$, $m$}
		\State{$\vv_1 \beta \qreq \vb$}
		\State{$\vW_1 = \begin{bmatrix} A^\inv \vv_1 & A \vv_1 \end{bmatrix}$} \label{line:scil-w-1}
		\State{$\left[ [\vv_2,\, \vv_3],\,
		    \begin{bmatrix}
		        k_{21}  & h_{22} \\
		                & h_{32}
		    \end{bmatrix},\,
		    \begin{bmatrix}
		        k_{11} & h_{12}
		    \end{bmatrix} \right] = \bgsstep(\vv_1, \vW_1)$} \label{line:scil-bgsstep-1}
		\State{$k_{12} = 1$}
		\State{$h_{11} = 1$}
		\For{$j = 2, \ldots, m-1$}
    		\State{$\vW_j = \begin{bmatrix} A^\inv \vv_{2j-2} & A \vv_{2j-1} \end{bmatrix}$} \label{line:scil-w-j}
    		\State{$\left[ [\vv_{2j},\, \vv_{2j+1}],\,
    		    \begin{bmatrix}
    		        k_{2j,2j-1}   & h_{2j,2j} \\
    		                    & h_{2j+1,2j}
    		    \end{bmatrix},\,
    		    \begin{bmatrix}
    		        K_{1:2j-1,2j-1} & H_{1:2j-1,2j}
    		    \end{bmatrix} \right] = \bgsstep(\vV_{2j-1}, \vW_j)$} \label{line:scil-bgsstep-j}
    		\State{$k_{2j-1,2j} = 1$}
    		\State{$h_{2j-1,2j-1} = 1$}
		\EndFor
		\State \Return{$\vV_{2m-1}, K_{2m-1}, \Hunder_{2m}$ satisfying \eqref{eq:ek_arnoldi_hk}}
	\end{algorithmic}
\end{algorithm}

Observe that the structures of $K_{2m-1}$ and $\Hunder_{2m}$ for both MCIL and SCIL are very similar: in both cases, $K_{2m-1}$ and $\Hunder_{2m}$ are upper Hessenberg matrices with even columns being standard unit vectors. Furthermore, the first column of $K_{2m-1}$ is identical for both variants, as well as the first two basis vectors $\vv_1$ and $\vv_2$.  These observations motivate \Cref{thm:same_span}.

\begin{theorem} \label{thm:same_span}
    Let $\vV_{2m}^\M$ and $\vV_{2m}^\S$ denote the bases generated by \Cref{alg:mcil-ek-arnoldi,alg:scil-ek-arnoldi}, respectively, assuming no breakdowns (i.e., no subdiagonal elements are zero).  Then not only do $\vV_{2m}^\M$ and $\vV_{2m}^\S$ both span the EKS~\eqref{eq:ek_subspace}, but also $\vv_i^\M$ and $\vv_i^\S$ are identical up to sign for all $i \in \{1, \ldots, 2m-1\}$, i.e.,
    \begin{equation} \label{eq:same_span}
    	\vv_i^\M = \vv_i^\S c_i
    \end{equation}
where $c_i = \pm 1$ for all $i \in \{1, \ldots, 2m-1\}$.
\end{theorem}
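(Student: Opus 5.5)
The plan is to prove, by induction on $j$, that both algorithms produce nested orthonormal bases of one and the same nested sequence of subspaces. Both algorithms orthonormalize against all previous vectors (full Gram-Schmidt, in exact arithmetic), so such a nested sequence determines each new vector up to sign. Concretely, set
\[
\spEK^{(2j)} := \spEK_{j+1,j}(A,\vb) = \spn\{A^{-j}\vb,\ldots,\vb,\ldots,A^{j-1}\vb\},
\qquad
\spEK^{(2j+1)} := \spEK_{j+1,j+1}(A,\vb),
\]
with $\spEK^{(1)} = \spn\{\vb\}$. The claim I will establish for both variants is $\spn\{\vv_1,\ldots,\vv_i\} = \spEK^{(i)}$ for each $i$, with $\dim \spEK^{(i)} = i$ guaranteed by the no-breakdown assumption. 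Given this, $\vv_i^\M$ and $\vv_i^\S$ are both unit vectors in $\spEK^{(i)}$ orthogonal to $\spEK^{(i-1)}$. That orthogonal complement is one-dimensional, so \eqref{eq:same_span} follows with $c_i = \pm1$.

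\textbf{Mixed channel.} Assume $\spn\{\vv_1,\ldots,\vv_{2j-1}\} = \spEK^{(2j-1)}$.
\begin{itemize}
\item Write $\vv_{2j-1} = \alpha A^{j-1}\vb + \vx$ with $\vx \in \spEK^{(2j-2)}$. The coefficient $\alpha$ is nonzero because $h_{2j-1,2j-2} \neq 0$.
\item Then $A^\inv\vv_{2j-1} = \alpha A^{j-2}\vb + A^\inv\vx$.
\item The key observation is that $A^\inv\vx \in A^\inv\spEK^{(2j-2)} = \spn\{A^{-j}\vb,\ldots,A^{j-3}\vb\}$. Hence $A^\inv\vv_{2j-1}$ lies in $\spEK^{(2j-1)}$ plus a multiple of $A^{-j}\vb$.
\item That multiple is exactly the component of $\vx$ along $A^{-j+1}\vb$. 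One has to show it is nonzero, and it is guaranteed nonzero precisely when $k_{2j,2j-1} \neq 0$, which is the no-breakdown hypothesis.
\end{itemize}
Therefore $\spn\{\vv_1,\ldots,\vv_{2j}\} = \spEK^{(2j)}$. Applying $A$ to $\vv_{2j}$ is symmetric: $\vv_{2j}$ has a nonzero $A^{-j}\vb$ component, and $A$ maps $\spEK^{(2j)}$ into $\spEK^{(2j+1)}$. This gives the new direction $A^{j}\vb$, again with nonvanishing coefficient because $h_{2j+1,2j} \neq 0$.

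\textbf{Split channel.} This case is more direct. Assume $\spn\{\vv_1,\ldots,\vv_{2j-1}\} = \spEK^{(2j-1)}$.
\begin{itemize}
\item By the induction hypothesis, $\vv_{2j-2}$ has a nonzero component along $A^{-j+1}\vb$ modulo $\spEK^{(2j-3)}$, and $\vv_{2j-1}$ has a nonzero component along $A^{j-1}\vb$ modulo $\spEK^{(2j-2)}$.
\item Therefore $A^\inv\vv_{2j-2} \in \spEK^{(2j-1)} + \spn\{A^{-j}\vb\}$ and $A\vv_{2j-1} \in \spEK^{(2j-1)} + \spn\{A^{j}\vb\}$.
\item The block step $\bgsstep$ computes an economic QR of the projected block. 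Its first column is orthogonalized only against $\vV_{2j-1}$; its second column is orthogonalized against $\vV_{2j-1}$ and $\vv_{2j}$.
\end{itemize}
This yields $\spn\{\vv_1,\ldots,\vv_{2j}\} = \spEK^{(2j-1)} + \spn\{A^{-j}\vb\} = \spEK^{(2j)}$, and similarly for $2j+1$. The nonzero-leading-coefficient statements hold because the diagonal entries of the triangular factor are nonzero, by the no-breakdown assumption. The base cases $\vv_1$, and $\vv_2,\vv_3$ from line~\ref{line:scil-bgsstep-1}, are checked directly.

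\textbf{Main obstacle.} The hardest step is the mixed-channel bookkeeping showing that applying $A^\inv$ to the \emph{most recent} vector adds exactly the next negative power and nothing beyond it. The cleanest route is to track the degree pair (lowest negative power, highest positive power) of each $\vv_i$ viewed as a Laurent polynomial in $A$ applied to $\vb$. The task is then to show that $\vv_{2j-1}$ has nonzero coefficients at both extremes $A^{-j+1}$ and $A^{j-1}$. I would handle this by strengthening the induction hypothesis to record both extreme coefficients, and I would verify that the no-breakdown assumption (the subdiagonal entries of $K_{2m-1}$ and $\Hunder_{2m}$ are nonzero) is exactly what keeps them nonzero.
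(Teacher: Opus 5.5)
Your proposal is correct and follows the paper's argument. Like the paper, you conclude that $\vv_i^\M$ and $\vv_i^\S$ both lie in the one-dimensional space $\spEK^{(i)}\cap(\spEK^{(i-1)})^\perp$; the only difference is that you prove the nested-span property $\spn\{\vv_1,\ldots,\vv_i\}=\spEK^{(i)}$ by induction, whereas the paper cites the literature for it and says ``by construction.''

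Your ``main obstacle'' is not really an obstacle, and you need no tracking of extreme Laurent coefficients. The inclusions $A^{-1}\spEK^{(2j-1)}\subseteq\spEK^{(2j)}$ and $A\spEK^{(2j)}\subseteq\spEK^{(2j+1)}$ (for MCIL), and $A^{-1}\spEK^{(2j-2)}\subseteq\spEK^{(2j)}$ and $A\spEK^{(2j-1)}\subseteq\spEK^{(2j+1)}$ (for SCIL), together with the no-breakdown hypothesis and a dimension count, already give the nested spans.
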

\begin{proof}
    That \Cref{alg:mcil-ek-arnoldi,alg:scil-ek-arnoldi} produce bases spanning the extended Krylov subspace~\eqref{eq:ek_subspace} is well-known. See, e.g,~\cite{Sim07} for the SCIL case and \cite{JagR09} for the MCIL case.
    
    Clearly, the first two basis vectors $\vv_1, \vv_2$ agree in both algorithms, so that~\eqref{eq:same_span} holds with $c_1 = c_2 = 1$.  Now let $i = 2j+1, j\geq 1$ be odd. Then, by construction, 
    \begin{equation} \label{eq:intersection_spaces}
        \vv_i^\M, \vv_i^\S \in \spEK_{j+1,j+1}(A,\vb) \cap \left(\spEK_{j+1,j}(A,\vb)\right)^\perp.
    \end{equation}
    As we assumed that no breakdown occurs, we have $\dim(\spEK_{j+1,j+1}(A,\vb)) = 2j+1, \dim(\spEK_{j+1,j}(A,\vb)) = 2j$. As further $\spEK_{j+1,j}(A,\vb) \subseteq \spEK_{j+1,j+1}(A,\vb)$, the space on the right-hand side of~\eqref{eq:intersection_spaces} has dimension exactly one. Thus, it directly follows that $\vv_i^\M$ and $\vv_i^\S$ are collinear. As they are of unit norm, the collinearity factor needs to satisfy $c_i = \pm 1$.
    For $i = 2j+2, j \geq 1$ odd, we have
    \[
        \vv_i^\M, \vv_i^\S \in \spEK_{j+2,j+1}(A,\vb) \cap \left(\spEK_{j+1,j+1}(A,\vb)\right)^\perp,
    \]
    and we can argue completely analogously to the odd case.
\end{proof}

\begin{remark} \label{rem:basis}
It is quite striking that despite the different candidate vectors and order of operations, \Cref{alg:mcil-ek-arnoldi,alg:scil-ek-arnoldi} essentially produce the same basis for the extended Krylov space. Thus, one might assume that it really does not matter much which algorithmic variant one chooses, and our exploration of the literature and code for EKSMs indicates that many researchers (at least implicitly) apply this logic and assume that all implementations are the same mathematically. However, both our experimental and theoretical findings reported in \Cref{sec:stability,sec:experiments} indicate that this is far from true once floating-point arithmetic enters the picture.
\end{remark}

\begin{remark} \label{rem:hkinv}
    When using an EKSM for approximating the action of a matrix function or the solution of a matrix equation, one requires access to the compression of $A$ onto the approximation space. In light of~\eqref{eq:ek_arnoldi_t}, we obviously have $\vV_{2m-1}^TA\vV_{2m-1} = T_{2m-1}$. In contrast to polynomial Krylov methods, where the compressed matrix simply contains the coefficients from the Gram--Schmidt orthogonalization and is thus available without any additional computations, one does not obtain $T_{2m-1}$ ``for free'' in extended Krylov methods.
    
    By considering the special structure of the matrices $H_{2m-1}$ and $K_{2m-1}$, cf.~\eqref{eq:K-mcil-ek-arnoldi}--\eqref{eq:H-mcil-ek-arnoldi} or \eqref{eq:K-scil-ek-arnoldi}--\eqref{eq:H-scil-ek-arnoldi}, it is easy to see that $T_{2m-1}$ is upper triangular with two additional subdiagonals, and that it is therefore pentadiagonal when $A$ is symmetric.
    
    There are at least three possible approaches for forming $T_{2m-1}$: 
    \begin{itemize}
        \item \texttt{explicit}: explicitly from the relation $T_{2m-1} = \vV_{2m-1}^T A \vV_{2m-1}$, which, however, is not attractive from a computational point of view, as it requires many additional matrix-vector products, inner products, and memory operations;
        \item \texttt{hk}: as $T_{2m-1} = H_{2m-1} K_{2m-1}^\inv$ by directly inverting $K_{2m-1}$ via, e.g., Gaussian elimination; or
        \item \texttt{direct}: by exploiting the special structure of the matrices $H_{2m-1}, K_{2m-1}$ to derive recurrences for directly computing the entries of $T_{2m-1}$. This has been done, e.g., in~\cite{Sim07} in the nonsymmetric setting and is specialized for the symmetric case in~\cite{JagR11}.
    \end{itemize}
    
    The different ways of constructing $T_{2m-1}$ behave differently in floating-point arithmetic, which we investigate in \Cref{sec:experiments}.
\end{remark}

\begin{remark} \label{rem:block}
    It is of course possible to consider a block vector $\vB$ of size $n \times s$ instead of the column vector $\vb$.  Deriving block implementations of EKSMs is relatively straightforward: in fact, Simoncini's extension of~\cite{DruK98} is already a block method~\cite{Sim07}.  For SCIL, care must be taken with the block size in lines~\ref{line:scil-bgsstep-1} and \ref{line:scil-bgsstep-j} of \Cref{alg:scil-ek-arnoldi}, which will be $2s$ instead of just $s$.  Block versions are implemented in our code \texttt{ektoolbox} and discussed in \Cref{sec:experiments}, but their careful analysis remains for future work.
\end{remark}

Similarly to the Lanczos process in the context of polynomial Krylov methods, it is well-known that the extended Krylov basis vectors fulfill a short recurrence when $A$ is symmetric. Interestingly, although both methods produce essentially the same basis according to \Cref{thm:same_span}, the MCIL approach leads to a three-term recurrence, while the SCIL approach leads to a five-term recurrence.

We summarize this observation in the following theorem. Although several variants of this result can be found in the literature, they do not exactly fit our setting. We therefore think it is instructive to also include a proof.

\begin{theorem} \label{thm:short_recurrences}
    Let $A$ be symmetric. Then, for the quantities from \Cref{alg:mcil-ek-arnoldi}, we have
    \begin{equation} \label{eq:short_recurrences_mcil}
        A^\inv \vv^\M_{2j-1} = \sum_{i=2j-2}^{2j} k^\M_{i,2j-1} \vv^\M_i, \qquad A\vv^\M_{2j} = \sum_{i=2j-1}^{2j+1} h^\M_{i,2j} \vv^\M_i,
    \end{equation}
    while for the quantities from \Cref{alg:scil-ek-arnoldi}, we have
    \begin{equation} \label{eq:short_recurrences_scil}
        A^\inv\vv^\S_{2j-2} = \sum_{i=2j-4}^{2j} k^\S_{i,2j-1} \vv^\S_i, \qquad A\vv^\S_{2j-1} = \sum_{i=2j-3}^{2j+1} h^\S_{i,2j} \vv^\S_i,
    \end{equation}
\end{theorem}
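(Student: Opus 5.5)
The plan is to show that the Gram--Schmidt coefficients that would fall outside the claimed index ranges vanish, using symmetry of $A$ (hence of $A^\inv$) together with the nested-space structure of the extended Krylov subspaces. By construction of \Cref{alg:mcil-ek-arnoldi}, the vector $A^\inv\vv^\M_{2j-1}$ lies in $\spn\{\vv^\M_1,\dots,\vv^\M_{2j}\}$. Its coefficient on $\vv^\M_i$ is $k^\M_{i,2j-1}=\ip{\vv^\M_i}{A^\inv\vv^\M_{2j-1}}=\ip{A^\inv\vv^\M_i}{\vv^\M_{2j-1}}$ for $i\le 2j-1$. It therefore suffices to show that $A^\inv\vv^\M_i\in\spn\{\vv^\M_1,\dots,\vv^\M_{2j-2}\}$ for $i\le 2j-3$. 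Orthogonality of $\vv^\M_{2j-1}$ to that span then gives $k^\M_{i,2j-1}=0$.

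To get this containment, I use the subspace bookkeeping from the proof of \Cref{thm:same_span}. Write $\spn\{\vv_1,\dots,\vv_{2k}\}=\spEK_{k+1,k}(A,\vb)$ and $\spn\{\vv_1,\dots,\vv_{2k+1}\}=\spEK_{k+1,k+1}(A,\vb)$. For $i\le 2j-3$ we have $\vv_i\in\spEK_{j-1,j-1}$, and $A^\inv\spEK_{j-1,j-1}\subseteq\spEK_{j,j-2}$. The space $\spEK_{j,j-2}$ is contained in $\spEK_{j,j-1}=\spn\{\vv_1,\dots,\vv_{2j-2}\}$, which is what we need. The $A$-recurrence is handled the same way. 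Here $h^\M_{i,2j}=\ip{A\vv^\M_i}{\vv^\M_{2j}}$, and for $i\le 2j-2$ we have $\vv_i\in\spEK_{j,j-1}$, so $A\vv_i\in\spEK_{j-1,j}\subseteq\spEK_{j,j}=\spn\{\vv_1,\dots,\vv_{2j-1}\}$, which is orthogonal to $\vv_{2j}$. This yields the three-term recurrences~\eqref{eq:short_recurrences_mcil}. Because they follow from statements about the spaces and not the specific vectors, \Cref{thm:same_span} could be used to transfer them to the SCIL basis; in the SCIL algorithm, however, the vectors being expanded are different.

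For SCIL, the operator $A^\inv$ is applied to $\vv^\S_{2j-2}\in\spEK_{j,j-1}$, so $A^\inv\vv^\S_{2j-2}\in\spEK_{j+1,j-1}\subseteq\spEK_{j+1,j}=\spn\{\vv_1,\dots,\vv_{2j}\}$. This confirms the upper index $2j$. For the lower index, take $i\le 2j-5$. Then $\vv_i\in\spEK_{j-2,j-2}$, so $A^\inv\vv_i\in\spEK_{j-1,j-3}\subseteq\spn\{\vv_1,\dots,\vv_{2j-4}\}$. Since $\vv_{2j-2}\in\spEK_{j,j-1}\cap\spEK_{j,j-2}^\perp$ is orthogonal to $\spn\{\vv_1,\dots,\vv_{2j-3}\}$, the coefficient vanishes. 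Similarly, $A\vv^\S_{2j-1}$ with $\vv_{2j-1}\in\spEK_{j,j}$ lies in $\spEK_{j,j+1}\subseteq\spn\{\vv_1,\dots,\vv_{2j+1}\}$. For $i\le 2j-4$ we have $A\vv_i\in A\spEK_{j-1,j-2}\subseteq\spEK_{j-1,j-1}=\spn\{\vv_1,\dots,\vv_{2j-3}\}$, which is orthogonal to $\vv_{2j-1}$. This gives~\eqref{eq:short_recurrences_scil}.

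The main obstacle is the index bookkeeping. It requires establishing, for each algorithm, the identities $\spn\{\vv_1,\dots,\vv_{2k}\}=\spEK_{k+1,k}$ and $\spn\{\vv_1,\dots,\vv_{2k+1}\}=\spEK_{k+1,k+1}$ under the no-breakdown assumption; I would justify these by induction along the algorithms, as implicitly used in \Cref{thm:same_span}. It also requires matching the coefficient labels $k_{i,2j-1}$ and $h_{i,2j}$ to the columns produced in line~\ref{line:scil-bgsstep-j}, keeping in mind that $\bgsstep$ orthogonalizes both candidates jointly. In SCIL the $A$-candidate's coefficient on $\vv_{2j}$ comes from the intra-block QR, so I must check that it is consistent with the stated range. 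The boundary cases $j=1,2$ (nonpositive lower indices) are handled by interpreting empty indices as absent.
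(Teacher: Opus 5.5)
Your proposal is correct and follows essentially the paper's argument. You move $A^{\pm1}$ across the inner product using symmetry, then use the nested spans $\spn\{\vv_1,\dots,\vv_{2k}\}=\spEK_{k+1,k}(A,\vb)$ and $\spn\{\vv_1,\dots,\vv_{2k+1}\}=\spEK_{k+1,k+1}(A,\vb)$ together with orthogonality; the only difference is that you bound all small indices $i$ at once by a single nested space, whereas the paper splits into odd and even $i$.

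One harmless slip: in the SCIL part, $\spn\{\vv_1,\dots,\vv_{2j-3}\}$ is $\spEK_{j-1,j-1}(A,\vb)$, not $\spEK_{j,j-2}(A,\vb)$, and $\vv_{2j-2}$ is in fact not orthogonal to the latter, which contains $A^{-j+1}\vb$. Since you actually use orthogonality to $\spn\{\vv_1,\dots,\vv_{2j-3}\}$, your conclusion stands.
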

\begin{proof}
    We start with the MCIL approach. Clearly, from \Cref{alg:mcil-ek-arnoldi}, the basis vectors fulfill the recurrences
    \begin{equation*}
        A^\inv \vv^\M_{2j-1} = \sum_{i=1}^{2j} k^\M_{i,2j-1} \vv^\M_i, \qquad A \vv^\M_{2j} = \sum_{i=0}^{2j+1} h^\M_{i,2j} \vv^\M_i,
    \end{equation*}
    with
    \begin{equation} \label{eq:coefficients_mcil}
        k^\M_{i,2j-1} = \ip{\vv^\M_i}{A^\inv \vv^\M_{2j-1}}, \qquad h^\M_{i,2j} = \ip{\vv^\M_i}{A \vv^\M_{2j}}
    \end{equation}
    and we will argue that most of the coefficients~\eqref{eq:coefficients_mcil} are zero. Due to the symmetry of $A$, we can rewrite~\eqref{eq:coefficients_mcil} as
    \begin{equation*}
        k^\M_{i,2j-1} = \ip{A^\inv \vv^\M_i}{\vv^\M_{2j-1}}, \qquad h^\M_{i,2j} = \ip{A \vv^\M_i}{\vv^\M_{2j}}
    \end{equation*}
    and notice that for odd $i = 2\ell-1$, $\vv_i^\M \in \spEK_{\ell,\ell}(A,\vb) \setminus \spEK_{\ell,\ell-1}(A,\vb)$, so that $A^\inv\vv_i^\M \in \spEK_{\ell+1,\ell-1}(A,\vb)$ and $A\vv_i^\M \in \spEK_{\ell-1,\ell+1}(A,\vb)$. Now, by construction, $\vv_{2j-1}^\M$ is orthogonal to $\spEK_{j,j-1}(A,\vb)$ and $\vv_{2j}^\M$ is orthogonal to $\spEK_{j,j}(A,\vb)$. Thus, $\ip{A^\inv \vv^\M_i}{\vv^\M_{2j-1}} = 0$ and $\ip{A \vv^\M_i}{\vv^\M_{2j}} = 0$ if $j \geq \ell+1$, i.e., if $i \leq 2j-3$. 
    
    Similarly, in the even case $i = 2\ell$, we have $\vv_i^\M \in \spEK_{\ell+1,\ell}(A,\vb) \setminus \spEK_{\ell,\ell}(A,\vb)$, so that $A^\inv\vv_i^\M \in \spEK_{\ell+2,\ell-1}(A,\vb)$ and $A\vv_i^\M \in \spEK_{\ell,\ell+1}(A,\vb)$. Again exploiting the orthogonality relations for $\vv_{2j-1}^\M$ and $\vv_{2j}^\M$, we have that $\ip{A^\inv\vv^\M_i}{\vv^\M_{2j-1}} = 0$ if $j \geq \ell+2$, i.e., if $i \leq 2j-4$ and $\ip{A\vv^\M_i}{\vv^\M_{2j}} = 0$ if $j \geq \ell +1$, i.e., if $i \leq 2j-2$. 
    
    Combining both cases, $k^\M_{i,2j-1} = 0$ for $i \leq 2j-3$ and $h^\M_{i,2j} = 0$ for $i \leq 2j-2$, which proves~\eqref{eq:short_recurrences_mcil}. 
    
    For SCIL, we start analogously, writing
    \begin{equation*}
        A^\inv \vv^\S_{2j-2} = \sum_{i=1}^{2j} k^\S_{i,2j-1} \vv^\S_i, \qquad A \vv^\S_{2j-1} = \sum_{i=0}^{2j+1} h^\S_{i,2j} \vv^\S_i,
    \end{equation*}
    with
    \begin{equation*}
        k^\S_{i,2j-1} = \ip{A^\inv \vv^\S_i}{\vv^\S_{2j-2}}, \qquad h^\S_{i,2j} = \ip{A \vv^\S_i}{\vv^\S_{2j-1}}
    \end{equation*}
    and proceed as before.
    
    For odd $i = 2\ell-1$, due to $A^\inv \vv_i^\M \in \spEK_{\ell+1,\ell-1}(A,\vb)$ and $A \vv_i^\M \in \spEK_{\ell-1,\ell+1}(A,\vb)$, we have $\ip{A^\inv \vv^\S_i}{\vv^\S_{2j-2}} = 0$ and $\ip{A \vv^\S_i}{\vv^\S_{2j-1}} = 0$ if $j - 1 \geq \ell+1$, i.e., if $i \leq 2j-5$, because $\vv^\S_{2j-2}$ is orthogonal to $\spEK_{j-1,j-1}(A,\vb)$ and $\vv^\S_{2j-1}$ is orthogonal to $\spEK_{j,j-1}(A,\vb)$.
    
    For even $i = 2\ell$, we have $A^\inv \vv_i^\M \in \spEK_{\ell+2,\ell-1}(A,\vb)$ and $A \vv_i^\M \in \spEK_{\ell,\ell+1}(A,\vb)$ and from the above orthogonality properties, it follows that $\ip{A^\inv \vv^\S_i}{\vv^\S_{2j-2}} = 0$ if $j-1 \geq \ell+2$, i.e., if $i \leq 2j-6$ and $\ip{A \vv^\S_i}{\vv^\S_{2j-1}} = 0$ if $j-1 \geq \ell+1$, i.e., if $i \leq 2j-5$. 
    
    By combining both cases, we find $k^\S_{i,2j-1} = 0$ for $i \leq 2j-5$ and $h^\S_{i,2j} = 0$ for $i \leq 2j-4$, which proves~\eqref{eq:short_recurrences_scil} and thus concludes the proof. 
\end{proof}

\subsection{Categorization of variants in the literature}
In order to make fair comparisons across EKSMs, it is important to understand how the numerical results have been generated.  For example, in Lund's own previous work \cite{KreLMetal21}, the MCIL version was used as a control method when testing a newer approach on both symmetric and nonsymmetric problems.  Although the test problems were well-behaved enough that it appears no major instability occurred, more challenging examples could have led to bias against the control method.  In this section, we discuss the implementation choices for a few select publications and classify these, along with others, in \Cref{tab:categorization}.

Druskin and Knizhnerman are the first to formalize EKSMs for matrix functions \cite{DruK98}.  At first glance, their proposed algorithm (detailed in \cite[Section~5]{DruK98} for Lanczos-based orthogonalization) appears to fit in neither the SCIL nor the MCIL paradigms.  They begin by computing a basis of size $m$ for the space generated by $A^\inv$ and then feed the first basis vector $\vv_1$ into an Arnoldi/Lanczos process for $A$.  Although bases for the parts corresponding to $A^\inv$ and $A$ are computed sequentially, because $\vv_1 \beta \qreq \vb$, the end effect is that the operators act on separate channels.  We denote this variant ``SeqSCIL", for ``sequential SCIL".

Some years later, Simoncini adapted the EKSM of \cite{DruK98} to a solver for Lyapunov matrix equations, commonly known as K-PIK (Krylov-plus-inverted-Krylov) \cite{Sim07}.  Although the operations are reordered and batched into blocks, Simoncini still enforces that $A$ and $A^\inv$ are applied to odd and even channels, respectively.  The reformulation is not exactly the same as our SCIL, as Simoncini's routine is initialized with the block vector $\begin{bmatrix} \vb & A^\inv \vb \end{bmatrix}$, which leads to one extra power of $A^\inv$ in the end.  Despite these differences, we classify \cite{Sim07} as SCIL.

Jagels and Reichel analyze EKSM from the perspective of orthogonal Laurent polynomials in \cite{JagJR21, JagR09, JagR11}.  They examine and provide pseudocode adapted to scenarios where $\ell \neq m$.  In each of their provided algorithms, it appears that $A$ and $A^\inv$ are in fact generated by mixing channels. They focus on Lanczos formulations for Hermitian $A$ and derive recurrence formulas for computing coefficients of the compressed matrix. Similarly, Schweitzer derives a non-Hermitian extended Lanczos method (employing matrix-vector products and linear solves with both $A$ and $A^T$) in~\cite{Sch17}, which also features a five-term recurrence for the basis vectors and recurrence relations for the coefficients. In contrast to the work by Jagels and Reichel, this work uses an SCIL approach. A block-version of the nonsymmetric extended Lanczos method (also using an SCIL approach) is considered in~\cite{SadA21} in the context of differential Lyapunov equations. 

Heyouni revisits extended block Arnoldi for Sylvester equations and uses the same process as K-PIK, including batching $\vB$ with $A^\inv \vB$ for the first step  \cite{Hey10}.  The block algorithm is further generalized to allow for global inner products, i.e., where the ``classical" block inner product $\IPcl{\vX}{\vY} := \vX^T\vY$ is replaced by
\[
    \IPgl{\vX}{\vY} := \frac{1}{s}\trace\big(\vX^T\vY\big) I_s,
\]
for $\vX, \vY \in \spR^{n \times s}$, and $I_s$ denoting the $s \times s$ identity.  The global inner product was first introduced in \cite{BouJSetal06}.

G\"ockler and Grimm treat a more general version of EKSM, wherein $(A-\gamma I)^\inv$ is used instead of $A^\inv$, for a fixed shift $\gamma \in \spR$ \cite{GocG13}\footnote{Such an approach was already advocated earlier in~\cite{BecR09}, albeit without a discussion of algorithmic details.}. Their pseudocode indicates they compute an Arnoldi relation of dimension $q$ in $A$ first, and then extend the basis via $(A-\gamma I)^\inv$, with $\vv_q$ initializing the process.  We refer to this variant as SeqMCIL. Similarly, Xu and Xue discuss the use of a pole different from 0 in~\cite{XuX23} in the context of approximating Stieltjes (or Markov) functions, while~\cite{GocG13} focuses on $\varphi$-functions in exponential integrators. The treatment in~\cite{XuX23} does not include a detailed algorithmic description in the paper itself, but the authors shared their code with Lund, who identified it as SCIL. In~\cite{ShaS13}, a similar approach is used for constrained Sylvester equations, where it is necessary to work with a shifted inverse, as the original, unshifted matrix is not invertible. As basis generation is not described in detail, it is not clear whether an MCIL or SCIL approach is used.

In~\cite{Sch16}, Schweitzer proves that extended Krylov approximations for $f(A)\vb$, where $f$ is a Laplace--Stieltjes function and $A$ is a Hermitian matrix, converge strictly monotonically, in the sense that the norm of the error reduces with each iteration of the method. The proof of this result makes use of information about the sign pattern in the compressed matrix $H_{2m-1}K_{2m-1}^\inv$ and builds upon recurrence relations proven in~\cite{JagR09,JagR11}. Further,~\cite[Section~7]{Sch15} presents computable \textit{a posteriori} error estimates for extended Krylov methods. The discussion is in terms of an SCIL approach, but the results can be straightforwardly generalized to MCIL.

Mertens and Vandebril establish a position vector that records the order in which new vectors are added \cite{MerV15}.  Their approach is general enough that it could be used to describe any of the variants in \Cref{tab:categorization}; indeed, they note that their ``algorithm has the potential to become adaptive and give the user the opportunity to adjust the position vector at runtime in order to improve convergence.  However, the question of how to choose an optimal position vector is a difficult one\dots''  Their default ordering appears to be MCPL. 

\begin{table}[htbp!]
    \centering
    \begin{tabular}{l|p{10.5cm}}
    	\thead{Variant}	& \thead{Source} \\ \hline
    	SeqSCIL			& \cite{DruK98} \\
    	SCIL			& \cite{Abd24, BenEJ20, BenJS17, BenSSetal18, BenEJetal24, BreSS16, BurHK12, ChaFCetal24, ChaGFetal20, DuZCetal24, EscS25, Flo24, FloES20, GiaGEetal24, GutS21, HacJ18a, HamJR25, Hey10, KniS10, NobA23, PalTNetal25, SadA21, SadA25, SadBSetal20, Sch15, Sch17, Sim07, Sim10, StoGFetal22, StoGFetal25, TseAFetal24} \\
    	SeqMCIL			& \cite{GocG13}	\\
    	MCIL			& \cite{AlG25, BenGJetal18, JagJR21, KreLMetal21, XuX23}\\
        MCPL            & \cite{JagR09, JagR11, MerV15, Sch16} \\
    	SCPL			& none \\
        Others          & \cite{GriL21}$^\dagger$, \cite{BenFS22}$^\ast$ \\
        Unknown         & \cite{ElTQetal24, ShaS13, XuX23}
    \end{tabular}
    \caption{Categorization of which EKSM variant a given source uses. ${}^\ast$This source uses non-standard first steps due to the Hamiltonian structure but after that is pure MC. $\dagger$This is actually more like a shift-invert method. Only a single multiplication with $A$ is performed, then only inverses are applied.}
    \label{tab:categorization}
\end{table}

See \Cref{tab:categorization} for a summary of the discussions in this section.  We have also categorized many sources that were not explicitly discussed but were straightforward to categorize.  Most implementations we have found follow a split-channel approach.

\section{On the loss of orthogonality} \label{sec:stability}

To the best of our knowledge, the floating-point stability of any variant of EKSM has not yet been explored.  An interesting quote from \cite{GocG13} seems to summarize the prevailing \emph{modus operandi}: ``Obvious improvements with respect to stability and efficiency [of the EKSM] should be performed."  Arguably, such improvements are not so obvious, and perhaps even the obvious ones could hinder accuracy or performance.

A first, somewhat simplified approach to understanding the loss of orthogonality (LOO) in the computed basis is the following: in each step of both algorithms, a candidate vector $\vw$ is orthogonalized against a given subspace $\VV$ and then normalized to obtain the next basis vector $\vq$. That is, in exact
arithmetic,
\[
    \vq = \frac{\vr}{\norm{\vr}}, \qquad \vr = (I - \Pi_{\VV}) \vw,
\]
where $\Pi_{\VV}$ is the orthogonal projector onto $\VV$. Using a standard model of floating-point arithmetic \cite{Hig02}, one actually computes
\[
    \vrhat = \vr + \delta\vr, \qquad \norm{\delta\vr} \lesssim cu \norm{\vw},
\]
where $u$ is the unit round-off and $c$ is a moderate constant. Thus, the normalization of the basis vectors amplifies the perturbation via
\[
    \frac{\norm{\delta\vr}}{\norm{\vr}}
    \lesssim cu \frac{\norm{\vw}}{\norm{\vr}}
    = cu \frac{\norm{\vw}}{\norm{(I-\Pi_{\VV})\vw}}.
\]
In other words, whenever the norm of $(I-\Pi_{\VV})\vw$ (the vector remaining after orthogonalization) is very small compared to the norm of the candidate vector $\vw$, a substantial error amplification might take place, which in turn leads to a substantial LOO. We therefore investigate the reciprocal quantities
\begin{equation*}
 \rho_k^\S :=\frac{\norm{(I-\Pi_{\VV_k})\vw_k^\S}}{\norm{\vw_k^\S}}, \qquad
  \rho_k^\M :=\frac{\norm{(I-\Pi_{\VV_k})\vw_k^\M}}{\norm{\vw_k^\M}},
\end{equation*}
where $\vw_k^\S$ and $\vw_k^\M$ are the candidate vectors of SCIL and MCIL, respectively, and $\VV_k$ is either of the form $\spEK_{j,j-1}(A, \vb)$ or $\spEK_{j,j}(A, \vb)$, depending on whether $k$ is even or odd. Thus, smaller values of $\rho_k^\S$ or $\rho_k^\M$ correspond to larger local error amplification. Even more compactly, we can use the ratio
\begin{equation} \label{eq:gamma}
  \gamma_k:=\frac{\rho_k^\M}{\rho_k^\S}
\end{equation}
to compare the two algorithmic variants. If $\gamma_k>1$ at any given step, we can expect a larger LOO in the SCIL variant, while $\gamma_k<1$ indicates that MCIL might have a larger LOO.

This point of view does not account for all potential sources of LOO, which might also depend on the specific orthogonalization routine that is used.  For example, the analysis in~\cite[Section~3]{Bjo94} shows that the orthogonality of a basis computed by modified Gram--Schmidt (which is arguably the default for most Krylov implementations) might actually suffer from cancellations in each individual substep (i.e., orthogonalization against one individual basis vector). However, as the following example illustrates, the ratio~\eqref{eq:gamma} is in fact a good proxy for comparing the MCIL and SCIL variants.

\begin{example} \label{ex:nonsymm}
    \begin{figure}
        \centering
        \begin{tabular}{cc}
            \includegraphics[width=0.45\linewidth]{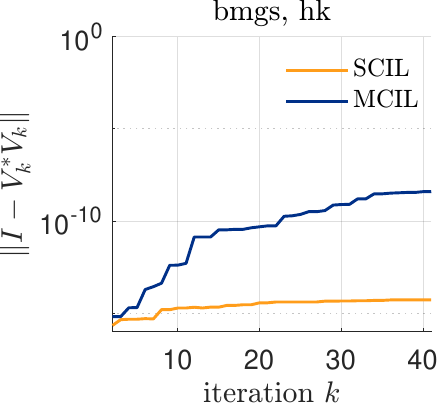} &
            \includegraphics[width=0.45\linewidth]{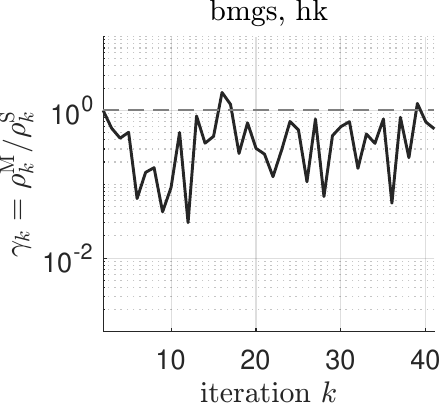}
        \end{tabular}
        \caption{LOO and ``instability ratio'' $\gamma_k$ for $A$ and $\vb$ from \Cref{ex:nonsymm}. The dashed line in the lower plot corresponds to $\gamma_k = 1$, i.e., MCIL and SCIL being on par.}
        \label{fig:ex_stab_nonsymm}
    \end{figure}
    Consider the simple problem setup
    
    \vspace{.2cm}
    
    \begin{verbatim}
        r = rng(4); n = 1000; w = 3;
        M = spdiags(rand(n,2*w+1),-w:w,n,n);
        b = randn(n,1);
    \end{verbatim}

    We run twenty iterations of \Cref{alg:mcil-ek-arnoldi} and \Cref{alg:scil-ek-arnoldi} using modified Gram--Schmidt orthogonalization. \Cref{fig:ex_stab_nonsymm} depicts the LOO of the two extended Krylov bases, as well as the ratios $\gamma_k$ from~\eqref{eq:gamma}. It is clearly visible, particularly in early iterations, that steep increases in the LOO occur exactly when $\gamma_k \ll 1$. We also observe that in general $\gamma_k < 1$ seems to occur much more frequently than $\gamma_k > 1$, and that if $\gamma_k$ exceeds 1, it does so only mildly; see also \Cref{sec:experiments}. This might suggest that one can expect the MCIL approach to sometimes behave a lot worse than SCIL, while the opposite behavior seems rather unlikely.
\end{example}

Interestingly, the observed behavior changes fundamentally once $A$ is symmetric positive definite.

\begin{example} \label{ex:spd}
    \begin{figure}
        \centering
        \begin{tabular}{cc}
            \includegraphics[width=0.45\linewidth]{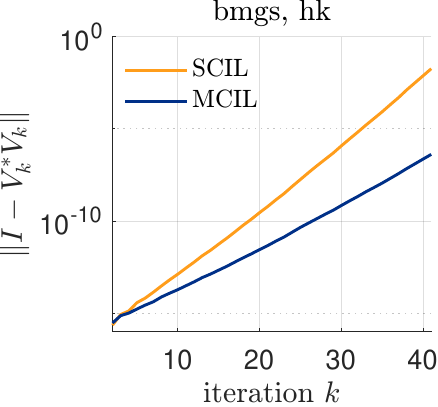} &
            \includegraphics[width=0.45\linewidth]{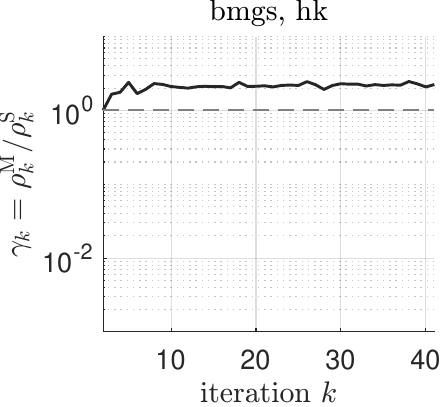} 
        \end{tabular}
        \caption{LOO and ``instability ratio'' $\gamma_k$ for $A$ and $\vb$ from \Cref{ex:spd}. The dashed line in the lower plot corresponds to $\gamma_k = 1$, i.e., MCIL and SCIL being on par.}
        \label{fig:ex_stab_spd}
    \end{figure}
    Consider the following modification of the setup in \Cref{ex:nonsymm}, which makes the matrix $A$ symmetric positive definite.
    
    \vspace{.2cm}
    
    \begin{verbatim}
        r = rng(4); n = 1000; w = 3;
        M = spdiags(rand(n,2*w+1),-w:w,n,n);
        A = M+M'+3.5*speye(n);
        b = randn(n,1);
    \end{verbatim}

    We again run twenty iterations of the MCIL and SCIL extended Krylov method using modified Gram--Schmidt orthogonalization.\footnote{Note that we perform full orthogonalization (despite the existence of short recurrences for the basis vectors -- see \Cref{thm:short_recurrences}) in order to not introduce a further potential source of LOO. Our experiments in \Cref{sec:experiments} show that the qualitative conclusions stay valid also when short recurrences are employed.} \Cref{fig:ex_stab_spd} depicts the LOO of the two extended Krylov bases, as well as the ratios $\gamma_k$ from~\eqref{eq:gamma}. In contrast to before, we now observe that $\gamma_k > 1$ for all $k \geq 3$, and this is also reflected in the LOO, which is consistently larger for SCIL than for MCIL.
\end{example}

In the following, we aim to formalize and explain the observations from \Cref{ex:nonsymm,ex:spd}.  It turns out that in the positive definite case, one obtains a rigorous comparison inequality. The lower plot in \Cref{fig:ex_stab_nonsymm} already shows that in the general case, the comparison between SCIL and MCIL can go either way.

Throughout the remainder of this section, in light of \Cref{thm:same_span}, we assume that the phases are fixed in such a way that SCIL and MCIL produce the common real orthonormal basis $\vv_1,\vv_2,\ldots$. We also use the SCIL QR convention in which the subdiagonal coefficients are positive.

\begin{theorem} \label{thm:spd_mcil_vs_scil}
    Let $A\in\spR^{n\times n}$ be symmetric positive definite, and assume that there is no breakdown in \Cref{alg:mcil-ek-arnoldi,alg:scil-ek-arnoldi}. Then
    \[
        \rho_2^\M = \rho_2^\S \qquad\text{ and }\qquad \rho_k^\M > \rho_k^\S, \qquad k \geq 3.
    \]
    Consequently, with $\gamma_k := \rho_k^\M / \rho_k^\S$, one has $\gamma_2 = 1$ and $\gamma_k > 1$ for all $k \geq 3$.
\end{theorem}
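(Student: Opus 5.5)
The plan is to compare $\rho_k^\M$ and $\rho_k^\S$ in exact arithmetic, using the common orthonormal basis $\vv_1,\vv_2,\ldots$ from \Cref{thm:same_span}. The case $k=2$ is immediate. Both algorithms use the same candidate $\vw_2=A^\inv\vv_1$ and orthogonalize it against the same space $\spn\{\vv_1\}$, so $\rho_2^\M=\rho_2^\S$.

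For $k\geq 3$, write each $\rho_k$ as a ratio of a subdiagonal coefficient to a norm. Both variants orthogonalize against $\VV_k=\spn\{\vv_1,\ldots,\vv_{k-1}\}$ and produce the same $\vv_k$ up to sign. Hence $(I-\Pi_{\VV_k})\vw_k=\ip{\vv_k}{\vw_k}\vv_k$ and $\rho_k=|\ip{\vv_k}{\vw_k}|/\norm{\vw_k}$.

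Consider first even $k=2j$. Here $\vw^\M=A^\inv\vv_{2j-1}$ and $\vw^\S=A^\inv\vv_{2j-2}$, so the numerators are $|k^\M_{2j,2j-1}|$ and $|k^\S_{2j,2j-1}|$. To relate them, expand $A\vv_{2j-2}$ using the MCIL three-term recurrence of \Cref{thm:short_recurrences}:
\[
A\vv_{2j-2}=h_{2j-3}\vv_{2j-3}+h_d\vv_{2j-2}+h_s\vv_{2j-1},
\]
with $h_d=\ip{\vv_{2j-2}}{A\vv_{2j-2}}>0$ by positive definiteness and $h_s\neq 0$ by no breakdown. Apply $A^\inv$ and take the inner product with $\vv_{2j}$. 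The left side gives $\ip{\vv_{2j}}{\vv_{2j-2}}=0$, and $\ip{\vv_{2j}}{A^\inv\vv_{2j-3}}=0$ because $A^\inv\vv_{2j-3}\in\spEK_{j,j-1}$, which is orthogonal to $\vv_{2j}$. What remains is the identity
\[
h_s\,k^\M_{2j,2j-1}=-h_d\,k^\S_{2j,2j-1}.
\]
Consequently
\[
\gamma_{2j}=\frac{h_d\,\norm{A^\inv\vv_{2j-2}}}{|h_s|\,\norm{A^\inv\vv_{2j-1}}}.
\]
The odd case $k=2j+1$ is handled the same way with the roles of $A$ and $A^\inv$ exchanged. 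Here $\vw^\M=A\vv_{2j}$ and $\vw^\S=A\vv_{2j-1}$. Expand $A^\inv\vv_{2j-1}$ by the MCIL recurrence, whose diagonal entry $k_d=\ip{\vv_{2j-1}}{A^\inv\vv_{2j-1}}$ is again positive. This gives
\[
\gamma_{2j+1}=\frac{k_d\,\norm{A\vv_{2j-1}}}{|k_s|\,\norm{A\vv_{2j}}}.
\]

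The main obstacle is proving that these two ratios exceed $1$, which is where positive definiteness must really be used beyond $h_d>0$. I would first treat the odd case. Let $\vv=\vv_{2j-1}$, let $\vv'=\vv_{2j}$, and write $A^\inv\vv=k_p\vv_{2j-2}+k_d\vv+k_s\vv'$. Solving for $\vv'$ gives
\[
A\vv'=\frac{\vv-k_pA\vv_{2j-2}-k_dA\vv}{k_s}.
\]
The claim then becomes $\norm{\vv-k_pA\vv_{2j-2}-k_dA\vv}<k_d\norm{A\vv}$. One natural route is to expand the square and show the cross terms are negative, using $\ip{\vv}{A\vv}>0$, the Cauchy--Schwarz inequality in the $A$- and $A^\inv$-inner products (e.g.\ $k_d=\ip{\vv}{A^\inv\vv}\geq 1/\ip{\vv}{A\vv}$), and the orthogonality $\ip{\vv}{A\vv_{2j-2}}=h_{2j-1,2j-2}$ with its known sign. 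If this is not sharp enough, I would restrict everything to the invariant-like three-dimensional subspace spanned by consecutive basis vectors. On that subspace I would use the sign pattern of the pentadiagonal compression $T$ from \cite{JagR11,Sch16}, together with the positive-subdiagonal SCIL convention, to pin down the signs of $k_p$ and $h_{2j-3}$. Once the odd case holds, the even case should follow by the same computation with $A$ and $A^\inv$ swapped.
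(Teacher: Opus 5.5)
Your reduction is sound. In exact arithmetic $\rho_k = |\ip{\vv_k}{\vw_k}|/\norm{\vw_k}$, the identity $h_s k^\M_{2j,2j-1} = -h_d k^\S_{2j,2j-1}$ is correct, and your formulas for $\gamma_{2j}$ and $\gamma_{2j+1}$ agree with what the paper's $\tau_j,\sigma_j$ give. But the proposal stops where the content of the theorem begins: $\gamma_k>1$ is never proved, and neither sketched route gets there.

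To see what is needed, take $k=2j+1$ with $j\ge 2$. Write $A\vv = a\vv_{2j-3}+b\vv_{2j-2}+c\vv_{2j-1}+d\vv_{2j}+e\vv_{2j+1}$ and set $\vy := \vv - k_pA\vv_{2j-2} = k_dA\vv + k_sA\vv_{2j}$. Since $\vy\in\spn\{\vv_1,\ldots,\vv_{2j-1}\}$ and $A\vv_{2j}$ obeys the three-term recurrence, $\vy = k_d\bigl(a\vv_{2j-3}+b\vv_{2j-2}+(c-x)\vv_{2j-1}\bigr)$ with $x := -dk_s/k_d = k_s^2\ip{\vv_{2j}}{A\vv_{2j}}/k_d^2>0$. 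Your target inequality is therefore equivalent to $x^2<a^2+b^2+c^2$, and the $\vv_{2j-1}$-component of $\vy$ gives
\[
  c - x = \frac{1 - P}{k_d}, \qquad P := k_p\,\ip{\vv_{2j-1}}{A\vv_{2j-2}} = \ip{\vv_{2j-2}}{A^\inv\vv_{2j-1}}\,\ip{\vv_{2j-2}}{A\vv_{2j-1}}.
\]
Everything hinges on the phase-invariant product $P$.

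Your Cauchy--Schwarz ingredient is $R_i := \ip{\vv_i}{A\vv_i}\ip{\vv_i}{A^\inv\vv_i}\ge 1$. It handles $k=3$, where there is no $k_p$ term, and $k=4$, where the analogous product is $1-R_1\le 0$. But already at $k=5$ one finds $P = R_1-R_2$, so even the sufficient condition $x<c$ amounts to $R_2>R_1-1$, which does not follow from $R_i\ge 1$. The other suggestions also fall short:
\begin{itemize}
\item The sign of $\ip{\vv}{A\vv_{2j-2}}$ alone is a phase convention, so calling it ``known'' assumes exactly what must be proved.
\item No three consecutive basis vectors span an invariant subspace ($A\vv_{2j-1}$ generally involves five basis vectors).
\item A sign pattern for $T=HK^\inv$ does not by itself give the signs of individual Gram--Schmidt coefficients.
\end{itemize}

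The missing ingredient is the paper's sign lemma. Under the SCIL phase convention, every nonzero entry in an odd column of $K^\S$ is positive, and the even columns of $H^\S$ have signs $+,-,+,-,+$; hence $\tau_j>0$ and $\sigma_j<0$. It is proved by induction on $j$. The induction combines positivity of the Rayleigh quotients $\ip{\vv_i}{A^{\pm1}\vv_i}$ and of the subdiagonal norms with SCIL--MCIL coefficient relations such as $k^\M_{2j-1,2j-1}=\tau_jk^\S_{2j-1,2j-1}$, $h^\M_{2j,2j}=\sigma_jh^\S_{2j,2j}$ and $h^\S_{2j-2,2j}=\sigma_{j-1}h^\S_{2j-3,2j}$. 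All of these come from the device you already used: apply $A^{\pm1}$ to a recurrence, compare coefficients, and use symmetry.

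With the lemma, $P = k^\S_{2j-1,2j-1}h^\S_{2j-2,2j}<0$, so $0<x<c$ and $\gamma_{2j+1}>1$ follows at once from your reformulation. The paper reaches the same conclusion via $d/\sigma_j = c+\nu_ja$ with $\nu_j<0$. The even steps are analogous, but not a literal $A\leftrightarrow A^\inv$ swap: SCIL's channel assignment is not invariant under it, and $k=4$ is a boundary case. There the relevant product is $\ip{\vv_{2j-3}}{A\vv_{2j-2}}\ip{\vv_{2j-3}}{A^\inv\vv_{2j-2}} = h^\S_{2j-2,2j-2}k^\S_{2j-3,2j-1}<0$, whose sign for $j\ge 3$ again comes only from the same lemma.
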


The proof of \Cref{thm:spd_mcil_vs_scil} as well as several technical lemmas it requires are given in \Cref{appendix:proofs}.

    \begin{figure}
        \centering
        \begin{tabular}{cc}
            \includegraphics[width=0.45\linewidth]{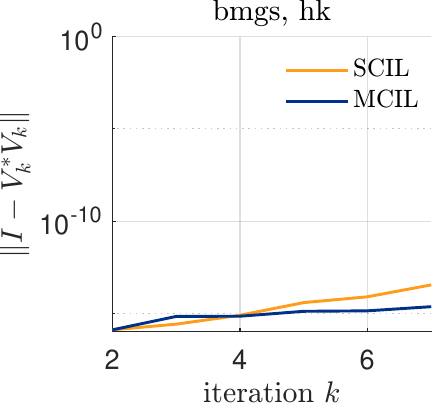} &
            \includegraphics[width=0.45\linewidth]{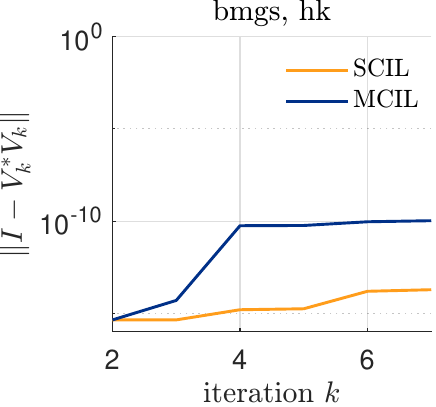} \\
            \includegraphics[width=0.45\linewidth]{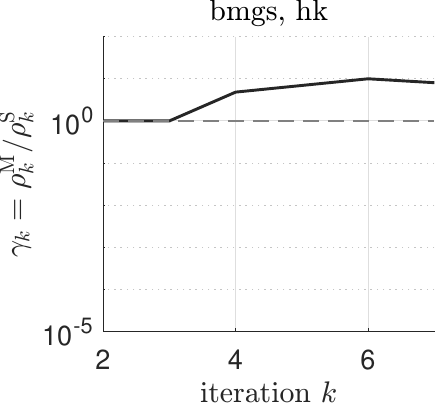} &\includegraphics[width=0.45\linewidth]{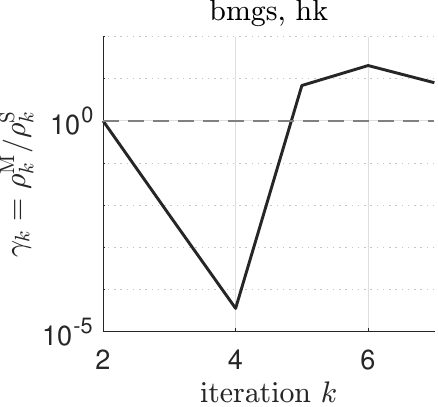} 
        \end{tabular}
        \caption{LOO and ``instability ratio'' $\gamma_k$ for $A$ from \Cref{ex:indefinite} and $\vb^{(1)}=(1,1,1,1,1,1,1,1)^T$ (left) and $\vb^{(2)}=(1,3,3,3,3,3,3,3)^T$ (right). The dashed line in the lower plots corresponds to $\gamma_k = 1$, i.e., MCIL and SCIL being on par.}
        \label{fig:ex_stab_indefinite}
    \end{figure}

\begin{remark} \label{rem:negative_definite}
    \Cref{thm:spd_mcil_vs_scil} also holds when $A$ is \emph{negative} definite, by replacing $A$ with $-A$, which generates essentially the same extended Krylov basis.
\end{remark}

The following example illustrates that \Cref{thm:spd_mcil_vs_scil} crucially depends on the definiteness of $A$. Already for the symmetric indefinite case, one can easily construct matrices for which there is no clear ``ranking'' of the LOO in MCIL and SCIL and where it might even depend on the specific starting vector $\vb$. 
\begin{example} \label{ex:indefinite}
    Consider the symmetric indefinite matrix
    \[
        A = \diag(-10^{-3}, 10^{-2}, 10^\inv, 1, 10, 10^2, 10^3, 10^4) \in \spR^{8 \times 8}
    \]
    and the two vectors
    \[
        \vb^{(1)} = (1,1,1,1,1,1,1,1)^T, \qquad
        \vb^{(2)} = (1,3,3,3,3,3,3,3)^T.
    \]
    \Cref{fig:ex_stab_indefinite} shows the LOO in the bases computed by \Cref{alg:mcil-ek-arnoldi} and \Cref{alg:scil-ek-arnoldi} for $A$ and the two different starting vectors. For $\vb^{(1)}$, $\gamma_k$ is moderately larger than $1$ for all $k$, and in line with that, the SCIL basis shows a slightly larger LOO overall (note however, that for dimension 3, MCIL shows larger LOO despite $\gamma_3 \approx 1$).
    
    For $\vb^{(2)}$, however, the loss in the MCIL basis is much larger. In particular, the fourth step shows a large increase in the LOO, in line with $\gamma_4 \ll 1$. From then on, the LOO in MCIL reaches a plateau, while it gradually increases in SCIL, in line with $\gamma_k > 1$ for $k \geq 5$.
    This example also illustrates again that $\rho_k$ is a \emph{local} indicator for the round-off error amplification occurring in step $k$, but does not say anything about the \emph{accumulated} LOO up to step $k$.
\end{example}

In light of \Cref{ex:indefinite}, it appears difficult to derive any general result akin to \Cref{thm:spd_mcil_vs_scil} for the comparison of MCIL and SCIL in the indefinite (or even nonsymmetric) setting.

\section{Numerical Experiments} \label{sec:experiments}
Experiments in this section demonstrate features of the \texttt{ektoolbox}\footnote{\url{https://doi.org/10.5281/zenodo.23337612}}, designed to prototype EKSM variants for matrix functions acting on a vector or block of vectors.  The toolbox is written modularly in MATLAB (2021b or later), so that users need only specify a few options to implement different variants.

One direct consequence of this modular approach is that several new methods are implemented, namely, EKSM with low-synchronization block Gram-Schmidt backbones.  For the experiments in this section, we look at Lanczos (\texttt{blanczos}), Modified Gram-Schmidt (\texttt{bmgs}), and Classical Gram-Schmidt with Inner Reorthogonalization (\texttt{bcgs\_iro}); single-column methods also pass through the block subroutines, hence the prefix \texttt{b}.  Note that SCIL can use block subroutines regardless of whether the starting vector is itself a block vector.  MCIL can only use block methods if the starting vector is a block vector.

A matrix function $f(A)$ is defined as follows for $f$ analytic on and inside a closed contour $\Gamma$ that encloses the spectrum of $A$:
\[
f(A) = \frac{1}{2 \pi i} \int_{\Gamma} f(z) (zI - A)^\inv dz.
\]
Further equivalent definitions can be found in Higham's monograph \cite{Hig08}.

We consider small ($n = 1000$) matrices that are symmetric positive definite (SPD), symmetric indefinite, or nonsymmetric; see Table~\ref{tab:matrices}.  For contrasting LOO against accuracy of a practical problem, we look at $f(z) = \exp(z)$ and $f(z) = z^{-1/2}$ for random $\vb$ (or $\vB$ with $s = 2$).  We plot the LOO of the computed basis as well as relative error, which is defined with respect to a ground truth solution $f(A)\vb$ computed by built-in MATLAB functions \texttt{expm} or \texttt{sqrtm} with \texttt{backslash} (i.e., dense methods accurate to double precision).  All calculations are carried out in double precision.

\begin{table}[htbp]
  \centering
  \caption{Matrices in the numerical experiments, all built from a shared banded matrix $M\in\spR^{1000\times1000}$ of bandwidth $w=3$ with uniformly sampled random entries in $[0,1]$.}
  \label{tab:matrices}
  \begin{tabular}{lll}
    \toprule
    Matrix & Definition & Properties \\
    \midrule
    $A_1$  & $M + M^{T} + 3.5\,I$  & symmetric positive definite\\
    $A_2$  & $-M - M^{T} + 3.5\,I$ & symmetric indefinite\\
    $A_3$  & $-M$                  & nonsymmetric\\
    $A_4$  & $2M + 3.5\,I$         & nonsymmetric with SPD symmetric part\\
    \bottomrule
  \end{tabular}
\end{table}

\begin{example}[$\exp(A_1) \vb$] \label{ex:expm_spd} This example clearly demonstrates the conclusions of \Cref{thm:spd_mcil_vs_scil}: \Cref{fig:expm_loo_spd} shows that SCIL has worse orthogonality than MCIL, regardless of the method for computing $T$ or the orthogonalization backbone.  (Indeed, here and throughout, it appears that computing $T$ via the \texttt{hk} or \texttt{direct} strategies are identical.  Sharper examples that tease out different behavior remain a topic for future work.)  When \texttt{bcgs\_iro} is used, all methods attain machine precision ($\bigO(10^{-16})$) LOO, so hardly any difference is detected there.  A block size $s > 1$ introduces further instability, so both methods struggle with block Lanczos.

When it comes to computing $f(A)\vb$, however, we see in \Cref{fig:expm_true_err_spd} that the $T$ method has a clear effect---the \texttt{explicit} approach via $V^T\! A V$ diverges unless the orthogonalization backbone is unconditionally stable.  Otherwise SCIL and MCIL behave the same.  Given that MCIL and Lanczos guarantee shorter recurrences, MCIL-Lanczos would be the winner for this example.

\begin{figure}[htbp!]
    \centering
    \includegraphics[width=0.5\textwidth]{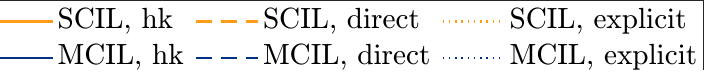}\\[1pt]
    \begin{tabular}{@{}c@{\hspace{2pt}}c@{\hspace{2pt}}c@{}}
        \includegraphics[width=0.31\textwidth]{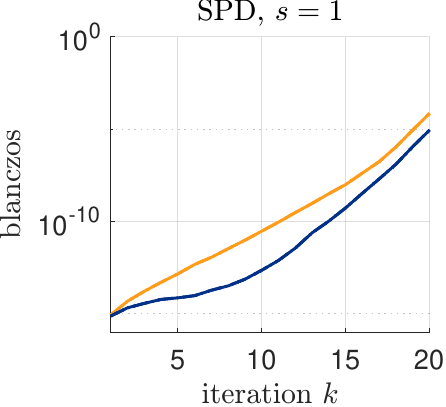} &
        \includegraphics[width=0.31\textwidth]{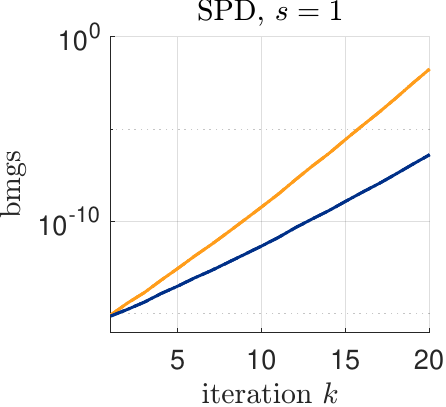} &
        \includegraphics[width=0.31\textwidth]{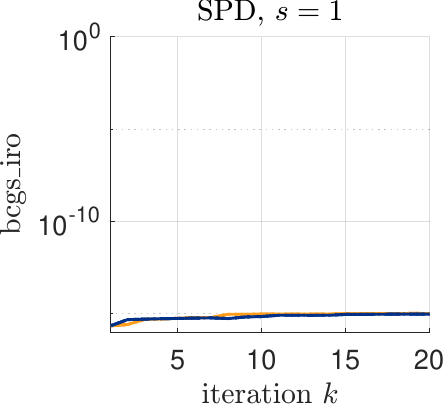} \\
        \includegraphics[width=0.31\textwidth]{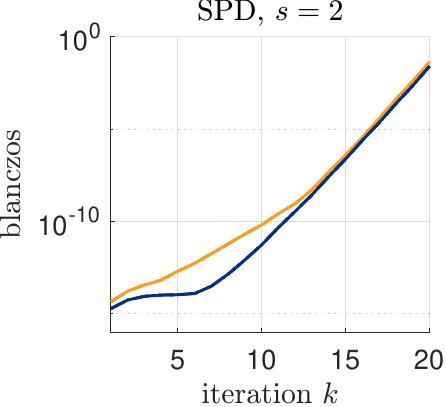} &
        \includegraphics[width=0.31\textwidth]{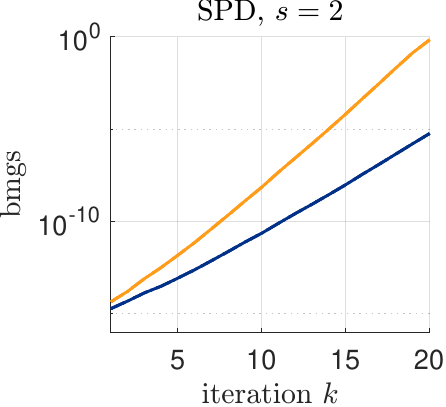} &
        \includegraphics[width=0.31\textwidth]{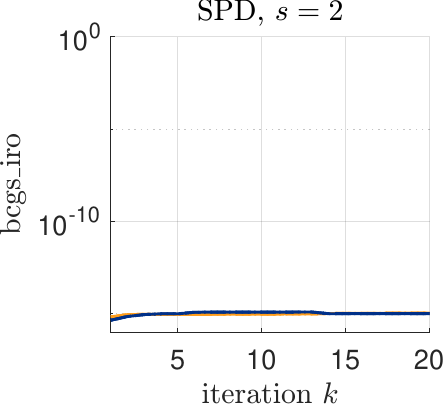} \\
    \end{tabular}
    \caption{LOO $\|I-\vV_k^T\vV_k\|$ for $\exp(A_1)$ (SPD).  See \Cref{ex:expm_spd}.}
    \label{fig:expm_loo_spd}
\end{figure}
\begin{figure}[htbp!]
    \centering
    \includegraphics[width=0.5\textwidth]{fig/expm/legend.pdf}\\[1pt]
    \begin{tabular}{@{}c@{\hspace{2pt}}c@{\hspace{2pt}}c@{}}
        \includegraphics[width=0.31\textwidth]{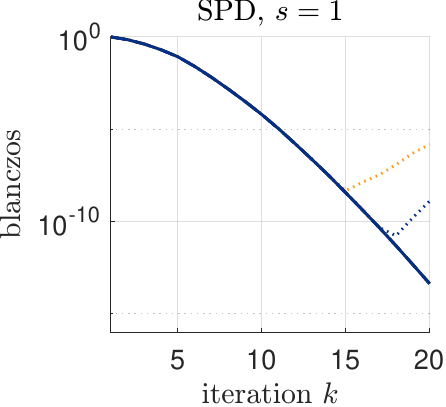} &
        \includegraphics[width=0.31\textwidth]{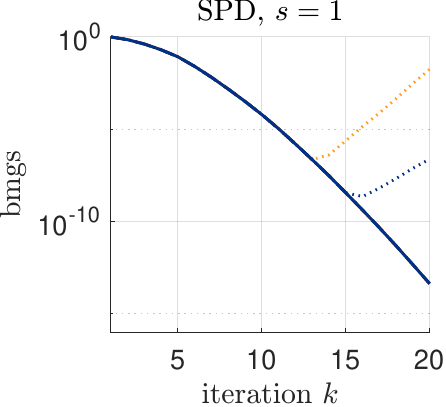} &
        \includegraphics[width=0.31\textwidth]{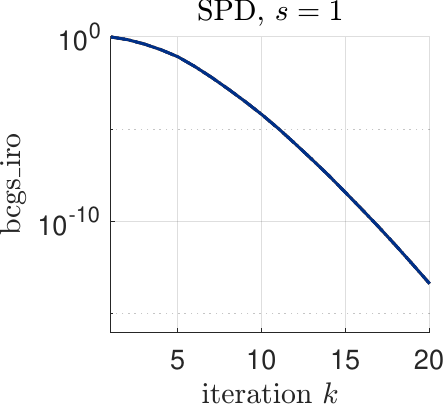} \\
        \includegraphics[width=0.31\textwidth]{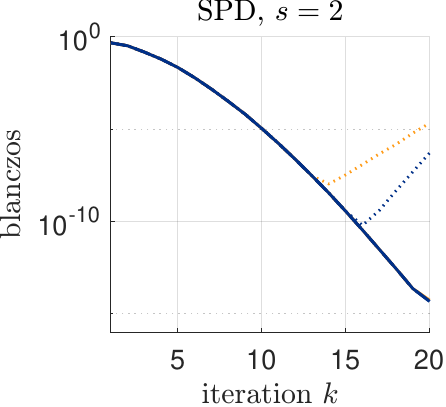} &
        \includegraphics[width=0.31\textwidth]{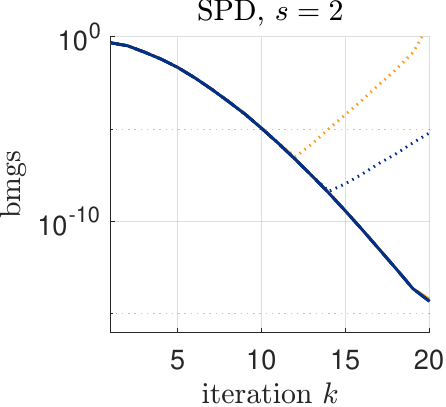} &
        \includegraphics[width=0.31\textwidth]{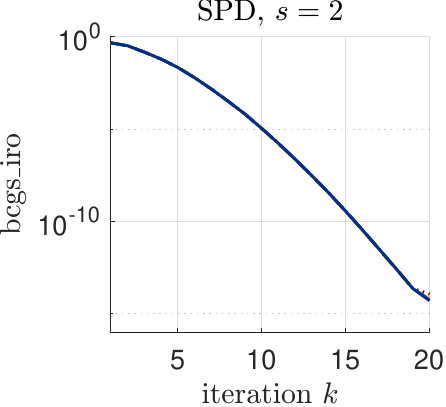} \\
    \end{tabular}
    \caption{True error $\|\fAb-\vx_k\|/\|\fAb\|$ for $\exp(A_1)$ (SPD).  See \Cref{ex:expm_spd}.}
    \label{fig:expm_true_err_spd}
\end{figure}
\end{example}

\begin{example}[$\exp(A_2) \vb$] \label{ex:expm_indef} In this example, we see both SCIL and MCIL with high LOO for \texttt{blanczos}, regardless of block size, while only MCIL appears to struggle for \texttt{bmgs}; cf.\ \Cref{fig:expm_loo_indef}.  As already noted in \Cref{sec:stability}, the orthogonality behavior on indefinite matrices is unpredictable. However, looking at the error in \Cref{fig:expm_true_err_indef}, we again see that really only the choice of how $T$ is computed affects the convergence behavior.  Yet again we would have to conclude that MCIL-Lanczos, with its lower computational costs, is the best approach.
    
\begin{figure}[htbp!]
    \centering
    \includegraphics[width=0.5\textwidth]{fig/expm/legend.pdf}\\[1pt]
    \begin{tabular}{@{}c@{\hspace{2pt}}c@{\hspace{2pt}}c@{}}
        \includegraphics[width=0.31\textwidth]{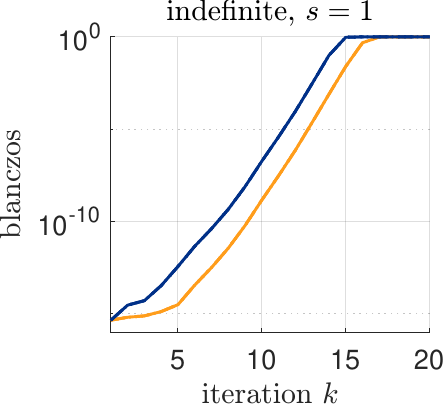} &
        \includegraphics[width=0.31\textwidth]{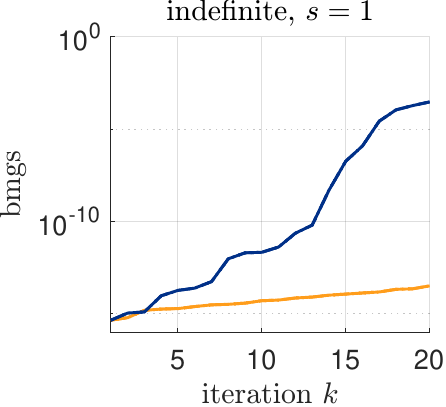} &
        \includegraphics[width=0.31\textwidth]{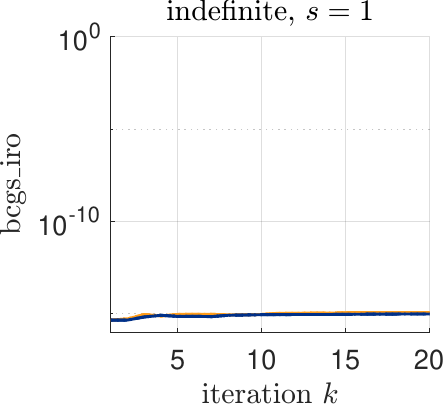} \\
        \includegraphics[width=0.31\textwidth]{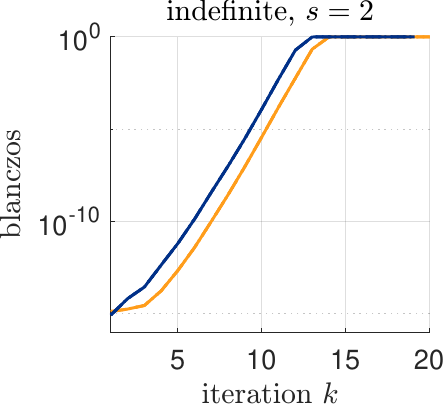} &
        \includegraphics[width=0.31\textwidth]{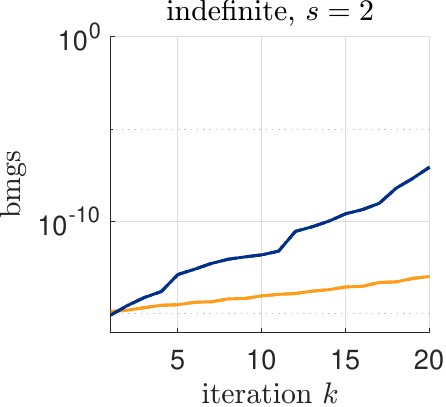} &
        \includegraphics[width=0.31\textwidth]{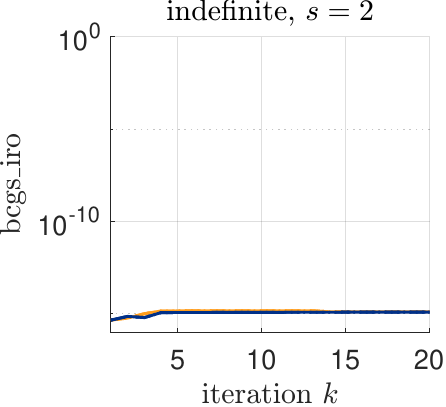} \\
    \end{tabular}
    \caption{LOO $\|I-\vV_k^T\vV_k\|$ for $\exp(A_2)$ (symmetric indefinite).  See \Cref{ex:expm_indef}.}
    \label{fig:expm_loo_indef}
\end{figure}
\begin{figure}[htbp!]
    \centering
    \includegraphics[width=0.5\textwidth]{fig/expm/legend.pdf}\\[1pt]
    \begin{tabular}{@{}c@{\hspace{2pt}}c@{\hspace{2pt}}c@{}}
        \includegraphics[width=0.31\textwidth]{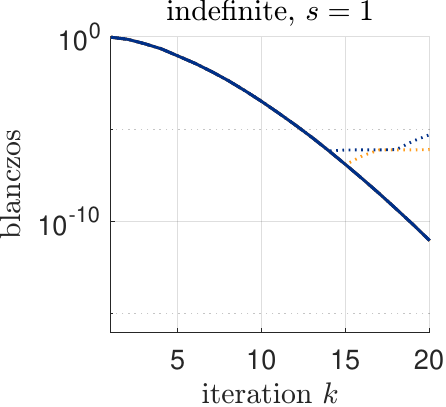} &
        \includegraphics[width=0.31\textwidth]{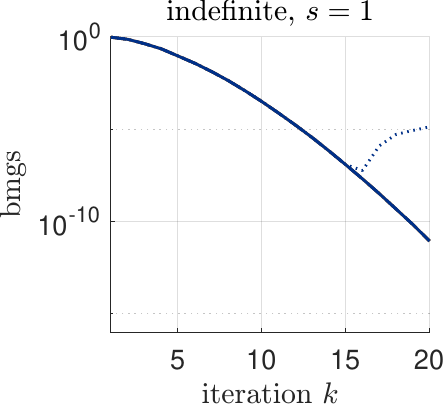} &
        \includegraphics[width=0.31\textwidth]{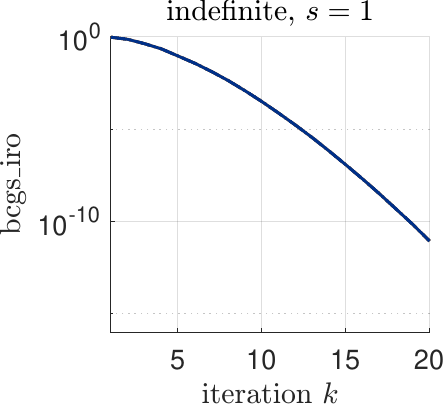} \\
        \includegraphics[width=0.31\textwidth]{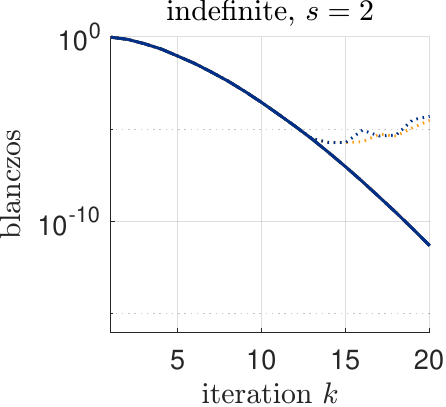} &
        \includegraphics[width=0.31\textwidth]{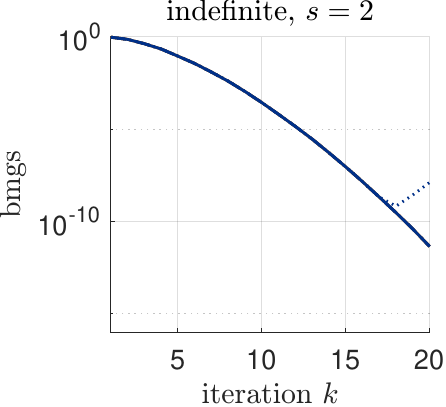} &
        \includegraphics[width=0.31\textwidth]{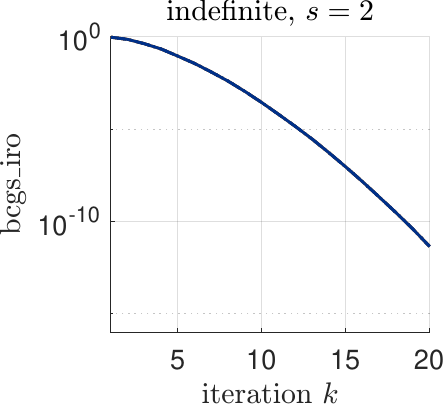} \\
    \end{tabular}
    \caption{True error $\|\fAb-\vx_k\|/\|\fAb\|$ for $\exp(A_2)$ (symmetric indefinite).  See \Cref{ex:expm_indef}.}
    \label{fig:expm_true_err_indef}
\end{figure}
\end{example}

\begin{example}[$\exp(A_3) \vb$] \label{ex:expm_nonsymm}
The plot thickens rather dramatically now.  \Cref{fig:expm_loo_nonsymm} shows that all Lanczos methods completely lose orthogonality almost immediately, while MCIL with \texttt{bmgs} loses orthogonality rather modestly against SCIL with \texttt{bmgs}.  The convergence plots in \Cref{fig:expm_true_err_nonsymm} run counter to previous lessons learned: MCIL with \texttt{blanczos} seemingly converges -- even for block vectors, albeit more slowly -- while all other Lanczos methods fail or stagnate.  For MCIL with \texttt{bmgs} or \texttt{bcgs\_iro}, the \texttt{explicit} $T$ method appears to have a stabilizing effect, even though the convergence is far slower than for any SCIL method.  In the absence of any understanding about MCIL's behavior here, we would recommend SCIL as the more reliable method in this context; to avoid high orthogonalization costs, a low-synchronization Gram-Schmidt method would be best, such as \texttt{bmgs\_svl}~\cite{Bar19}.  There is no reason a Lanczos-type method should lead to predictable convergence on a nonsymmetric matrix, but we cannot conclude this is due to pure chance; see \Cref{ex:invsqrtm_nonsymm}.
    
\begin{figure}[htbp!]
    \centering
    \includegraphics[width=0.5\textwidth]{fig/expm/legend.pdf}\\[1pt]
    \begin{tabular}{@{}c@{\hspace{2pt}}c@{\hspace{2pt}}c@{}}
        \includegraphics[width=0.31\textwidth]{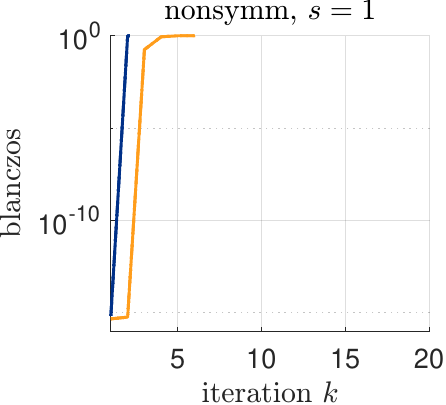} &
        \includegraphics[width=0.31\textwidth]{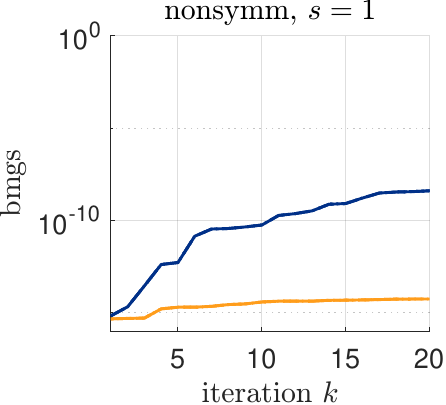} &
        \includegraphics[width=0.31\textwidth]{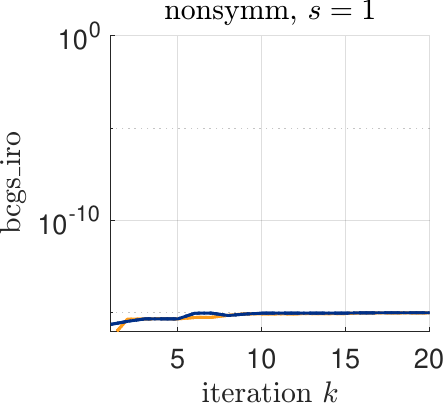} \\
        \includegraphics[width=0.31\textwidth]{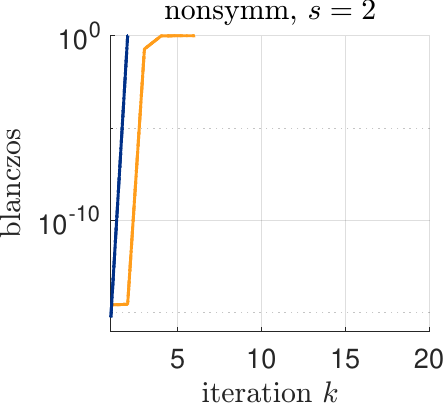} &
        \includegraphics[width=0.31\textwidth]{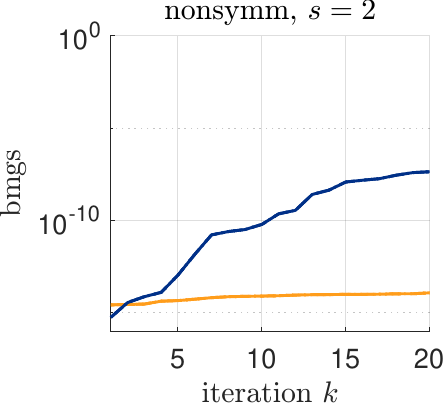} &
        \includegraphics[width=0.31\textwidth]{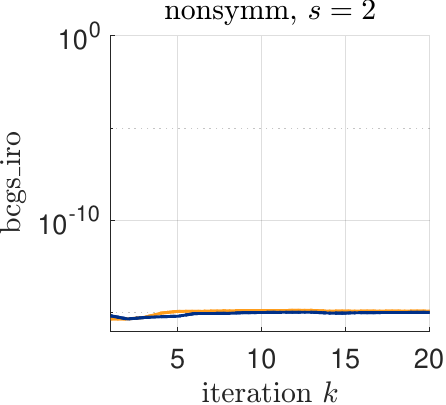} \\
    \end{tabular}
    \caption{LOO $\|I-\vV_k^T\vV_k\|$ for $\exp(A_3)$ (nonsymmetric).  See \Cref{ex:expm_nonsymm}.}
    \label{fig:expm_loo_nonsymm}
\end{figure}
\begin{figure}[htbp!]
    \centering
    \includegraphics[width=0.5\textwidth]{fig/expm/legend.pdf}\\[1pt]
    \begin{tabular}{@{}c@{\hspace{2pt}}c@{\hspace{2pt}}c@{}}
        \includegraphics[width=0.31\textwidth]{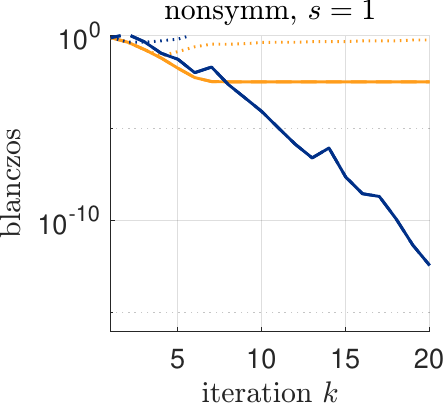} &
        \includegraphics[width=0.31\textwidth]{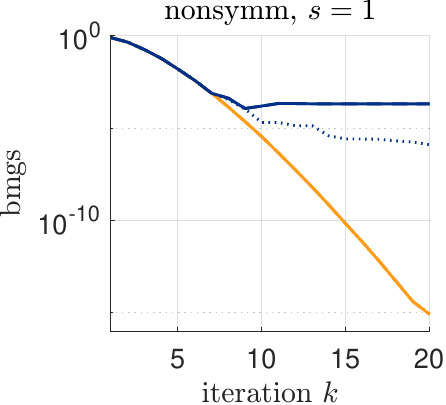} &
        \includegraphics[width=0.31\textwidth]{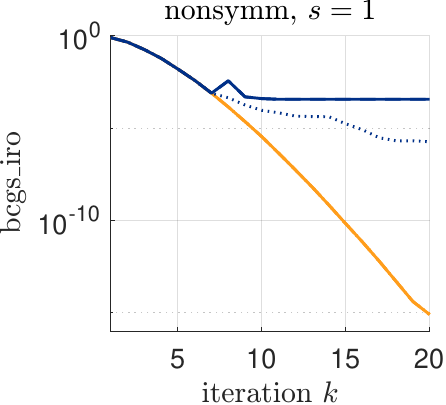} \\
        \includegraphics[width=0.31\textwidth]{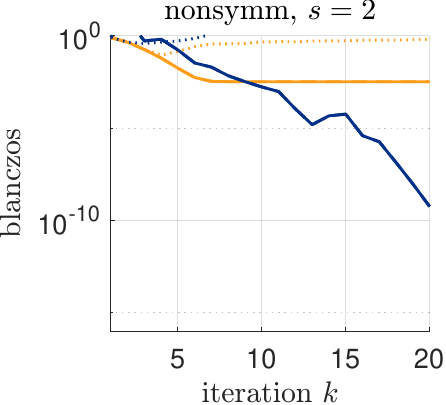} &
        \includegraphics[width=0.31\textwidth]{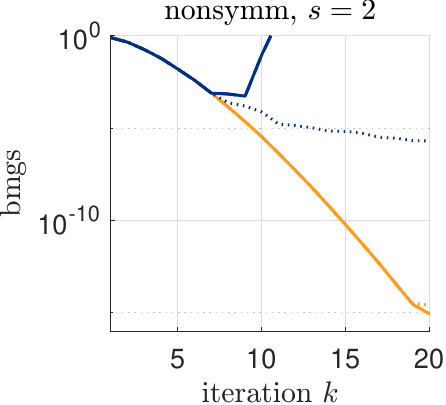} &
        \includegraphics[width=0.31\textwidth]{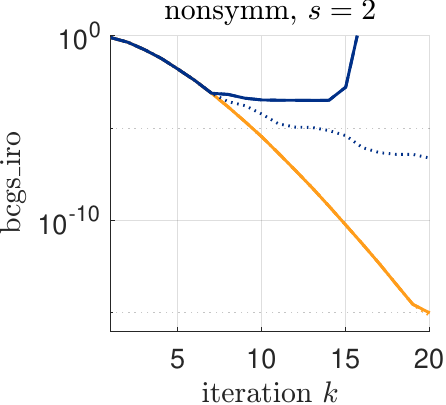} \\
    \end{tabular}
    \caption{True error $\|\fAb-\vx_k\|/\|\fAb\|$ for $\exp(A_3)$ (nonsymmetric).  See \Cref{ex:expm_nonsymm}.}
    \label{fig:expm_true_err_nonsymm}
\end{figure}
\end{example}

\begin{example}[$A_1^{-1/2}\vb$] \label{ex:invsqrtm_spd}
We omit LOO plots for $f(z) = z^{-1/2}$ on SPD $A$, as they are identical to those from \Cref{fig:expm_loo_spd}.  \Cref{fig:invsqrtm_true_err_spd} displays convergence plots for the new function.  Similarly to \Cref{fig:expm_true_err_spd}, we see that all methods converge except for those based on the \texttt{explicit} approach for $T$ when the orthogonalization routine is not accurate to machine precision.
    
\begin{figure}[htbp!]
    \centering
    \includegraphics[width=0.5\textwidth]{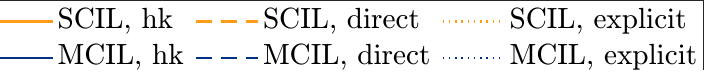}\\[1pt]
    \begin{tabular}{@{}c@{\hspace{2pt}}c@{\hspace{2pt}}c@{}}
        \includegraphics[width=0.31\textwidth]{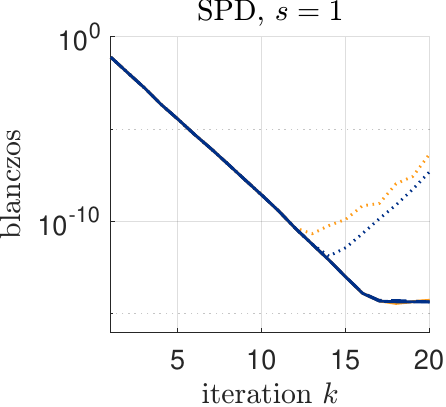} &
        \includegraphics[width=0.31\textwidth]{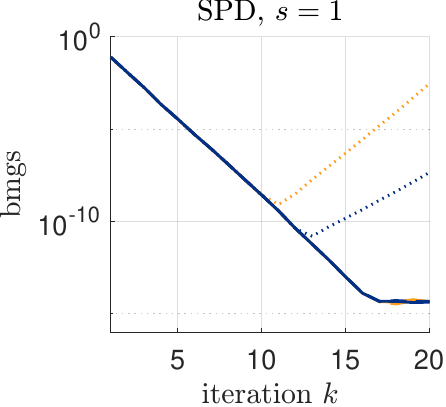} &
        \includegraphics[width=0.31\textwidth]{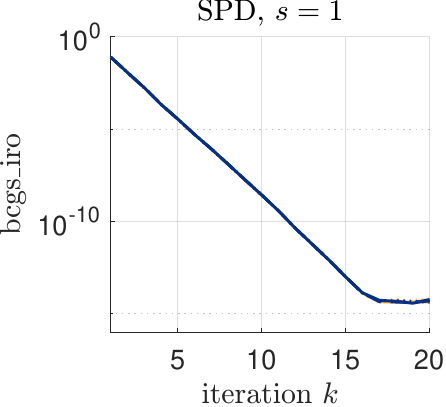} \\
        \includegraphics[width=0.31\textwidth]{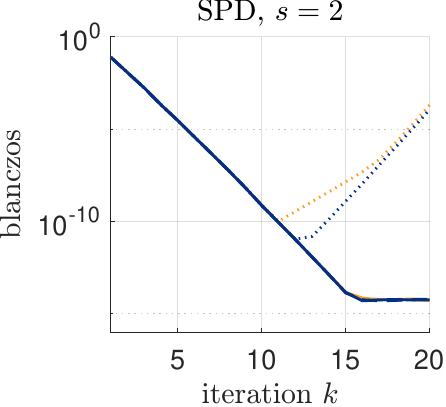} &
        \includegraphics[width=0.31\textwidth]{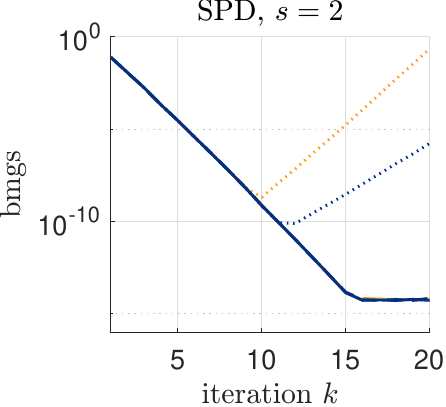} &
        \includegraphics[width=0.31\textwidth]{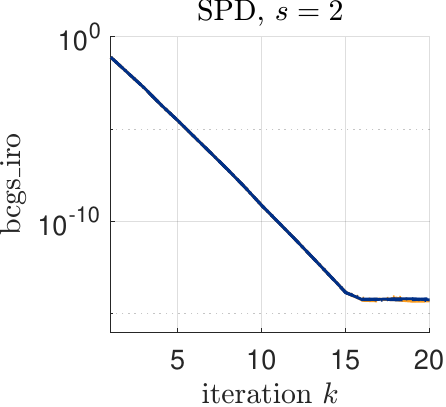} \\
    \end{tabular}
    \caption{True error $\|\fAb-\vx_k\|/\|\fAb\|$ for $A_1^{-1/2}$ (SPD). See \Cref{ex:invsqrtm_spd}.}
    \label{fig:invsqrtm_true_err_spd}
\end{figure}
\end{example}

\begin{example}[$A_4^{-1/2}\vb$] \label{ex:invsqrtm_nonsymm}
Our final example further confounds the story for nonsymmetric matrices.  Recall the differences between $A_4$ and $A_3$; $A_4$ is tailored to avoid branch cuts in the matrix square root.  \Cref{fig:invsqrtm_loo_nonsymm} shows that the LOO for Lanczos-based methods is similar to what we see in \Cref{fig:expm_loo_nonsymm}; but for \texttt{bmgs}, SCIL and MCIL trade places, with SCIL losing more orthogonality.  The convergence plots in \Cref{fig:invsqrtm_true_err_nonsymm} show that yet again, MCIL with \texttt{blanczos} can anyway converge.  Otherwise, the LOO seems only to have an effect when $T$ is computed via the \texttt{explicit} option, and all other non-Lanczos methods converge.

\begin{figure}[htbp!]
    \centering
    \includegraphics[width=0.5\textwidth]{fig/invsqrtm/legend.pdf}\\[1pt]
    \begin{tabular}{@{}c@{\hspace{2pt}}c@{\hspace{2pt}}c@{}}
        \includegraphics[width=0.31\textwidth]{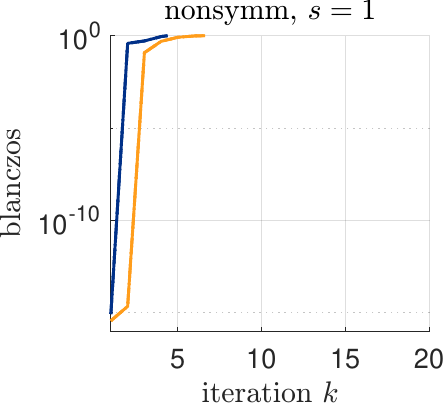} &
        \includegraphics[width=0.31\textwidth]{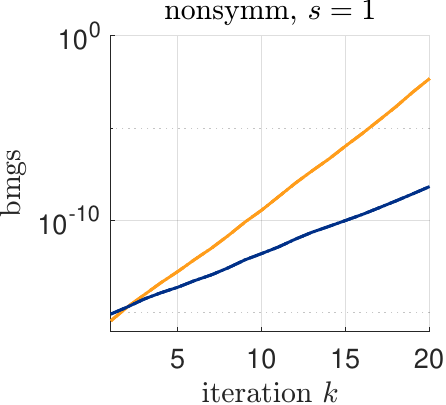} &
        \includegraphics[width=0.31\textwidth]{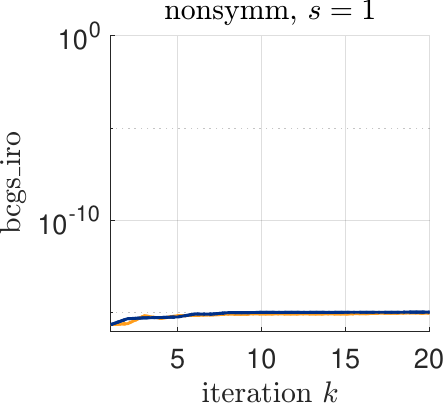} \\
        \includegraphics[width=0.31\textwidth]{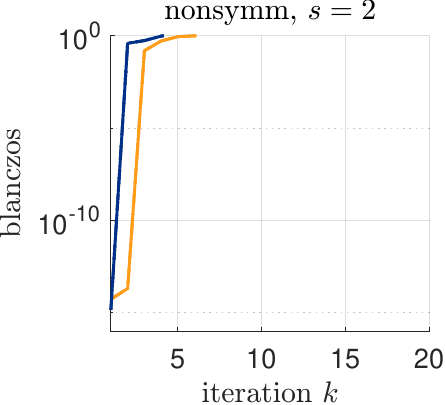} &
        \includegraphics[width=0.31\textwidth]{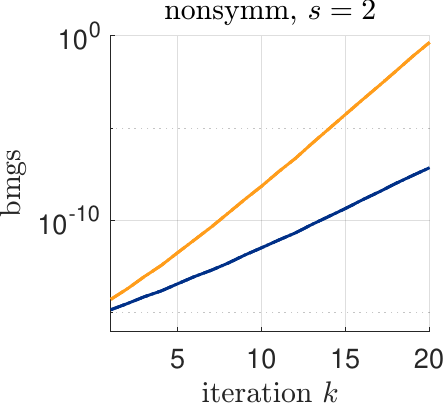} &
        \includegraphics[width=0.31\textwidth]{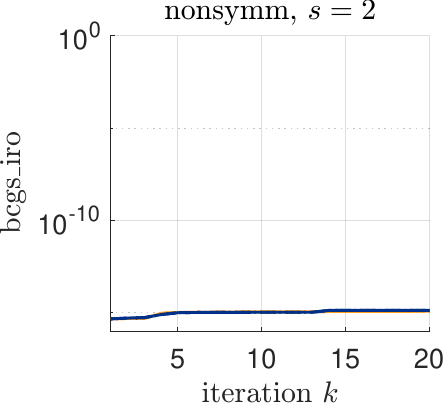} \\
    \end{tabular}
    \caption{LOO $\|I-\vV_k^T\vV_k\|$ for $A_4^{-1/2}$ (nonsymmetric). See \Cref{ex:invsqrtm_nonsymm}.}
    \label{fig:invsqrtm_loo_nonsymm}
\end{figure}
\begin{figure}[htbp!]
    \centering
    \includegraphics[width=0.5\textwidth]{fig/invsqrtm/legend.pdf}\\[1pt]
    \begin{tabular}{@{}c@{\hspace{2pt}}c@{\hspace{2pt}}c@{}}
        \includegraphics[width=0.31\textwidth]{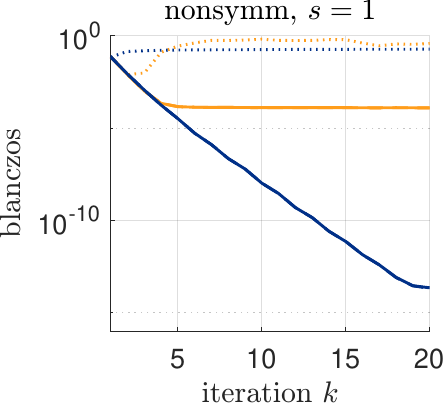} &
        \includegraphics[width=0.31\textwidth]{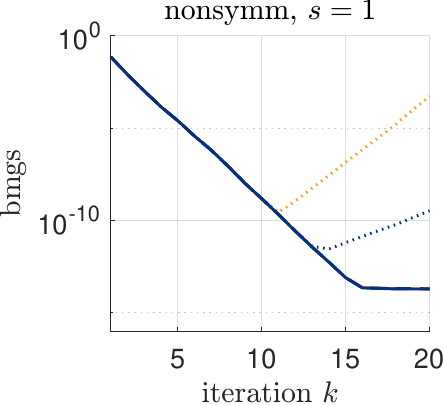} &
        \includegraphics[width=0.31\textwidth]{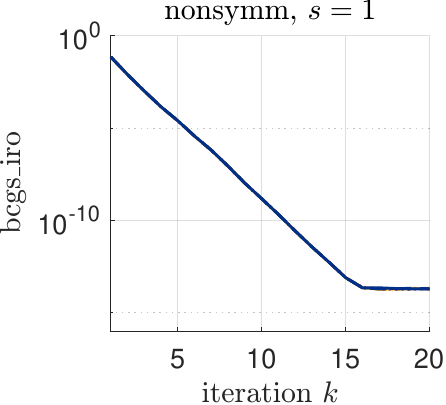} \\
        \includegraphics[width=0.31\textwidth]{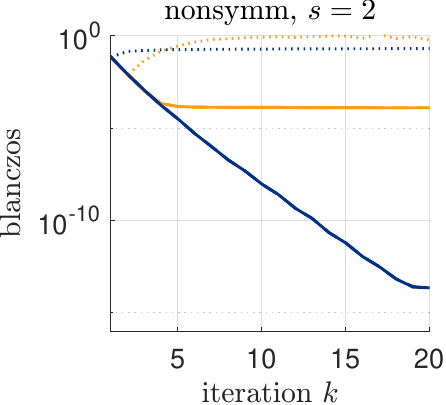} &
        \includegraphics[width=0.31\textwidth]{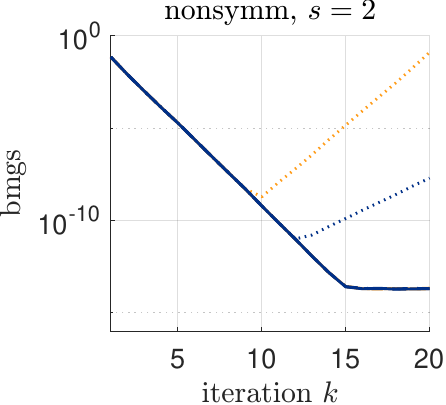} &
        \includegraphics[width=0.31\textwidth]{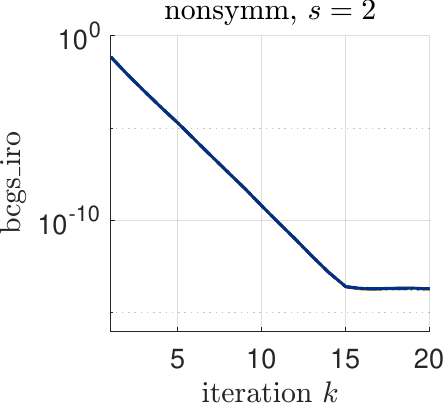} \\
    \end{tabular}
    \caption{True error $\|\fAb-\vx_k\|/\|\fAb\|$for $A_4^{-1/2}$ (nonsymmetric). See \Cref{ex:invsqrtm_nonsymm}.}
    \label{fig:invsqrtm_true_err_nonsymm}
\end{figure}
\end{example}

\section{Conclusions} \label{sec:conclusions}

This work has begun to investigate how basis-building choices in extended Krylov subspace methods affect loss of orthogonality and attainable accuracy in floating-point arithmetic.  We have classified methods by how new basis vectors are added, via either split or mixed channels.  For symmetric positive definite $A$, we have proven that mixed channels lead to 3-term recurrences, while split channels lead to five, and that a simple ratio confirms that mixed channel methods should have better orthogonality than split channel ones.  A diverse panel of numerical experiments, including symmetric indefinite and nonsymmetric matrices, as well as SPD ones, reveals more nuanced behavior.  The starting vector, choice of $T$ method, and orthogonalization backbone have strong effects.  When in doubt, one should ensure the basis is fully orthogonalized to reduce the strength of these effects.

One open question deserves to be highlighted again: mixed channels with a Lanczos backbone appear to converge for some nonsymmetric problems.  We have only tested this on two matrices with two functions; we expect the modularly written \texttt{ektoolbox} to be helpful in further explorations.

\appendix
\crefalias{section}{appendix}
\section{Technical lemmas and proof of \Cref{thm:spd_mcil_vs_scil}} \label{appendix:proofs}
To compare the SCIL and MCIL candidate vectors, we first relate the coefficients in their short recurrences. To do so, we define the auxiliary quantities
\begin{equation*} \label{eq:tau}
    \tau_j := -\frac{h^\S_{2j-2,2j-2}}{h^\S_{2j-1,2j-2}} \text{ for $j \geq 2$}
\end{equation*}
and
\begin{equation} \label{eq:sigma}
    \sigma_j := -\frac{k^\S_{2j-1,2j-1}}{k^\S_{2j,2j-1}} \text{ for $j \geq 1$}.
\end{equation}

\begin{lemma} \label{lem:coefficient-relations}
Let $A$ be symmetric. Then, for $j \geq 2$, the coefficients in the matrices $K^\S$ and $K^\M$ from \Cref{alg:mcil-ek-arnoldi} and \Cref{alg:scil-ek-arnoldi} satisfy
    \begin{equation} \label{eq:inverse-transform}
        k^\M_{2j-1,2j-1} = \tau_j k^\S_{2j-1,2j-1}, \quad k^\M_{2j,2j-1} = \tau_j k^\S_{2j,2j-1}, \quad k^\M_{2j-2,2j-1} = k^\S_{2j-1,2j-1}
    \end{equation}
    For $j=2$, we additionally have
    \begin{equation} \label{eq:inverse-boundary}
        k^\S_{1,3}=k^\S_{2,1},
    \end{equation}
    while for $j\geq3$, we additionally have
    \begin{equation} \label{eq:inverse-outer}
        k^\S_{2j-4,2j-1} = k^\S_{2j-2,2j-3}, \qquad k^\S_{2j-3,2j-1}=\tau_{j-1}k^\S_{2j-4,2j-1}.
    \end{equation}
    
    For $j\geq1$, the coefficients in the matrices $H^\S$ and $H^\M$ from \Cref{alg:mcil-ek-arnoldi} and \Cref{alg:scil-ek-arnoldi} satisfy
    \begin{equation} \label{eq:power-transform}
        h^\M_{2j,2j} = \sigma_j h^\S_{2j,2j}, \qquad h^\M_{2j+1,2j} = \sigma_j h^\S_{2j+1,2j}, \qquad h^\M_{2j-1,2j} = h^\S_{2j,2j}.
    \end{equation}
    For $j\geq2$, we additionally have
    \begin{equation} \label{eq:power-outer}
        h^\S_{2j-3,2j} = h^\S_{2j-1,2j-2}, \qquad h^\S_{2j-2,2j} = \sigma_{j-1}h^\S_{2j-3,2j}.
    \end{equation}
\end{lemma}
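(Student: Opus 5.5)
The plan is to express every coefficient in \Cref{lem:coefficient-relations} as an inner product of vectors from the common orthonormal basis $\vv_1,\vv_2,\ldots$ fixed after \Cref{thm:same_span}. Then each identity should follow from the symmetry of $A$ and $A^\inv$ together with the nested orthogonality $\vv_{2j-1}\perp\spEK_{j,j-1}(A,\vb)$, $\vv_{2j}\perp\spEK_{j,j}(A,\vb)$. As in the proof of \Cref{thm:short_recurrences}, we have
\[
k^\M_{i,2j-1}=\ip{\vv_i}{A^\inv\vv_{2j-1}},\quad h^\M_{i,2j}=\ip{\vv_i}{A\vv_{2j}},\quad k^\S_{i,2j-1}=\ip{\vv_i}{A^\inv\vv_{2j-2}},\quad h^\S_{i,2j}=\ip{\vv_i}{A\vv_{2j-1}}.
\]
Since the basis vectors are shared, no sign bookkeeping is needed beyond this. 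The positivity convention only guarantees that the denominators in $\tau_j,\sigma_j$ are nonzero, which is the no-breakdown assumption.

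For the inverse relations \eqref{eq:inverse-transform}, I would start from the SCIL power recurrence, column $2j-2$:
\[
A\vv_{2j-3}=\sum_{i\le 2j-3} h^\S_{i,2j-2}\vv_i+h^\S_{2j-2,2j-2}\vv_{2j-2}+h^\S_{2j-1,2j-2}\vv_{2j-1}.
\]
Applying $A^\inv$ gives
\[
\vv_{2j-3}=\sum_{i\le 2j-1}h^\S_{i,2j-2}A^\inv\vv_i .
\]
Next I would take inner products with $\vv_{2j}$ and with $\vv_{2j-1}$. The left-hand side gives $0$ in both cases.

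The key membership check concerns the terms with $i\le 2j-3$. For odd $i=2\ell-1$ with $\ell\le j-1$, we have $A^\inv\vv_i\in\spEK_{\ell+1,\ell-1}(A,\vb)$. For even $i=2\ell$ with $\ell\le j-2$, we have $A^\inv\vv_i\in\spEK_{\ell+2,\ell-1}(A,\vb)$. Both spaces lie in $\spEK_{j,j-1}(A,\vb)\subseteq\spEK_{j,j}(A,\vb)$, so these terms vanish against $\vv_{2j-1}$ and $\vv_{2j}$.

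Only the last two terms remain. Using the symmetry of $A^\inv$, this yields
\[
0=h^\S_{2j-2,2j-2}\,k^\S_{r,2j-1}+h^\S_{2j-1,2j-2}\,k^\M_{r,2j-1},\qquad r\in\{2j-1,2j\}.
\]
This is exactly $k^\M_{r,2j-1}=\tau_j k^\S_{r,2j-1}$. The third identity in \eqref{eq:inverse-transform} is pure symmetry:
\[
\ip{\vv_{2j-2}}{A^\inv\vv_{2j-1}}=\ip{\vv_{2j-1}}{A^\inv\vv_{2j-2}}.
\]

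For \eqref{eq:inverse-boundary} and \eqref{eq:inverse-outer}, I would again use symmetry. For instance, $k^\S_{2j-4,2j-1}=\ip{A^\inv\vv_{2j-4}}{\vv_{2j-2}}=k^\S_{2j-2,2j-3}$, and for $j=2$, $k^\S_{1,3}=\ip{A^\inv\vv_1}{\vv_2}=k^\S_{2,1}$. The second relation in \eqref{eq:inverse-outer} would be obtained by repeating the argument above with $j$ replaced by $j-1$ and pairing with $\vv_{2j-2}$ instead. Starting from $\vv_{2j-5}=\sum h^\S_{i,2j-4}A^\inv\vv_i$ and pairing with $\vv_{2j-2}$, only the indices $2j-4$ and $2j-3$ survive. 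This gives $\ip{\vv_{2j-2}}{A^\inv\vv_{2j-3}}=\tau_{j-1}\ip{\vv_{2j-2}}{A^\inv\vv_{2j-4}}$, and symmetry converts this to $k^\S_{2j-3,2j-1}=\tau_{j-1}k^\S_{2j-4,2j-1}$.

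The power relations \eqref{eq:power-transform} and \eqref{eq:power-outer} are the mirror image, with the roles of $A$ and $A^\inv$ swapped. I would start from the SCIL inverse recurrence
\[
A^\inv\vv_{2j-2}=\sum_{i\le 2j} k^\S_{i,2j-1}\vv_i,
\]
where for $j=1$ this reads $A^\inv\vv_1=k^\S_{11}\vv_1+k^\S_{21}\vv_2$. Applying $A$ and pairing with $\vv_{2j+1}$ and $\vv_{2j}$ kills all terms with $i\le 2j-2$. The reason is that $A\vv_i$ for such $i$ lies in $\spEK_{j,j}(A,\vb)$, which is orthogonal to both vectors. Only the coefficients of $A\vv_{2j-1}$ and $A\vv_{2j}$ survive, producing $\sigma_j$.

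The main obstacle I expect is purely index bookkeeping: verifying for each pairing that every discarded $A^{\pm1}\vv_i$ lies in the correct extended Krylov space, and handling the boundary cases. These are $j=1$ for $H$, where the indexing of $\vv_{2j-2}$ degenerates to $\vv_1$, and $j=2$ for $K$, where \eqref{eq:inverse-boundary} replaces \eqref{eq:inverse-outer}. I would treat these separately using the first block step of \Cref{alg:scil-ek-arnoldi}.
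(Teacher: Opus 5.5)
Your proposal is correct and follows essentially the paper's proof. You apply $A^{-1}$ to the SCIL expansion of $A\vv_{2j-3}$ (resp.\ $A$ to that of $A^{-1}\vv_{2j-2}$), compare coefficients of $\vv_{2j-1},\vv_{2j}$ (resp.\ $\vv_{2j},\vv_{2j+1}$), and obtain the remaining identities from symmetry combined with the index-shifted transform relations. The only difference is that you start from the full Gram--Schmidt expansions and discard terms by extended-Krylov-space membership, rather than from the short recurrences of \Cref{thm:short_recurrences}; this shows in passing that the first two identities in \eqref{eq:inverse-transform} and \eqref{eq:power-transform} do not actually require symmetry, so your appeal to ``the symmetry of $A^{-1}$'' at that step is unnecessary.
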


\begin{proof}
    We begin with the relations involving the entries of $K^\S$ and $K^\M$. For $j \geq 2$, applying $A^\inv$ to the five-term recurrence~\eqref{eq:short_recurrences_scil} for $A\vv_{2j-3}$ yields
    \begin{equation} \label{eq:apply-inverse-expanded}
    \vv_{2j-3} = \sum_{i=2j-5}^{2j-1} h^\S_{i,2j-2} A^\inv \vv_i.
    \end{equation}
    Only the terms with $i=2j-2$ and $i=2j-1$ on the right-hand side can have a component in the direction of $\vv_{2j}$.  From the recurrences~\eqref{eq:short_recurrences_mcil} for $A^\inv \vv_{2j-1}$ and~\eqref{eq:short_recurrences_scil} for $A^\inv \vv_{2j-2}$, we have
    \begin{equation*}
        A^\inv \vv_{2j-2} = \sum_{i=2j-4}^{2j} k^\S_{i,2j-1} \vv_i
        \quad\text{and}\quad
        A^\inv \vv_{2j-1} = \sum_{i=2j-2}^{2j} k^\M_{i,2j-1} \vv_i.
    \end{equation*}
    As $\vv_{2j-3}$ cannot have a component in direction $\vv_{2j}$, the corresponding coefficients in~\eqref{eq:apply-inverse-expanded} must equate to zero, i.e.,
    \begin{equation} \label{eq:compare-v2j-inverse}
    h^\S_{2j-2,2j-2}k^\S_{2j,2j-1} + h^\S_{2j-1,2j-2}k^\M_{2j,2j-1} = 0.
    \end{equation}
    Similarly, equating the coefficient of $\vv_{2j-1}$ gives
    \begin{equation} \label{eq:compare-v2jm1-inverse}
        h^\S_{2j-2,2j-2} k^\S_{2j-1,2j-1} + h^\S_{2j-1,2j-2} k^\M_{2j-1,2j-1} = 0.
    \end{equation}
    Dividing~\eqref{eq:compare-v2j-inverse} and~\eqref{eq:compare-v2jm1-inverse} by $h^\S_{2j-1,2j-2}$ and inserting the definition~\eqref{eq:tau} proves the first two equalities in~\eqref{eq:inverse-transform}.
    
    The remaining identities involving $K^\S$ and $K^\M$ follow from symmetry: For $j \geq 2$, we have
    \begin{equation*}
        k^\M_{2j-2,2j-1}
        = \ip{\vv_{2j-2}}{A^\inv \vv_{2j-1}}
        = \ip{\vv_{2j-1}}{A^\inv \vv_{2j-2}}
        = k^\S_{2j-1,2j-1},
    \end{equation*}
    which proves the last equality in~\eqref{eq:inverse-transform}. When $j = 2$, the same argument gives
    \begin{equation*}
        k^\S_{1,3}
        = \ip{\vv_1}{A^\inv \vv_2}
        = \ip{\vv_2}{A^\inv \vv_1}
        = k^\S_{2,1},
    \end{equation*}
    and hence~\eqref{eq:inverse-boundary}.  For $j\geq3$, it instead gives
    \begin{equation} \label{eq:inverse-symmetry-outer}
        k^\S_{2j-4,2j-1}
        =\ip{\vv_{2j-4}}{A^\inv \vv_{2j-2}}
        =\ip{\vv_{2j-2}}{A^\inv \vv_{2j-4}}
        =k^\S_{2j-2,2j-3}.
    \end{equation}
    Moreover,
    \begin{align*}
        k^\S_{2j-3,2j-1}
        &= \ip{\vv_{2j-3}}{A^\inv \vv_{2j-2}} = \ip{\vv_{2j-2}}{A^\inv \vv_{2j-3}} \\
        &= k^\M_{2j-2,2j-3} = \tau_{j-1} k^\S_{2j-2,2j-3} = \tau_{j-1} k^\S_{2j-4,2j-1},
    \end{align*}
    where the second-to-last equality results from~\eqref{eq:inverse-transform} with $j-1$ in place of $j$, and the last equality follows from~\eqref{eq:inverse-symmetry-outer}. This proves~\eqref{eq:inverse-outer}.
    
    We now turn to the relations involving $H^\S$ and $H^\M$.  For $j \geq 2$, we apply $A$ to the five-term recurrence~\eqref{eq:short_recurrences_scil} for $A^\inv \vv_{2j-2}$, which yields
    \begin{equation*}
        \vv_{2j-2} = \sum_{i=2j-4}^{2j} k^\S_{i,2j-1} A \vv_i.
    \end{equation*}
    Similarly to before, only the terms with $i=2j-1$ and $i=2j$ can have components in the directions of $\vv_{2j}$ and $\vv_{2j+1}$, and their recurrences are
    \[
        A \vv_{2j-1} = \sum_{i=2j-3}^{2j+1} h^\S_{i,2j} \vv_i
        \quad\text{and}\quad
        A \vv_{2j} = \sum_{i=2j-1}^{2j+1} h^\M_{i,2j} \vv_i.
    \]
    Noting that $\vv_{2j-2}$ cannot have components in direction $\vv_{2j+1}$ or $\vv_{2j}$ and equating the corresponding coefficients yields
    \begin{equation*}
        k^\S_{2j-1,2j-1} h^\S_{2j+1,2j} + k^\S_{2j,2j-1} h^\M_{2j+1,2j} = 0
    \end{equation*}
    and
    \begin{equation*}
        k^\S_{2j-1,2j-1} h^\S_{2j,2j} + k^\S_{2j,2j-1} h^\M_{2j,2j} = 0.
    \end{equation*}
    Dividing these equations by $k^\S_{2j,2j-1}$ and inserting~\eqref{eq:sigma} proves the first two equalities in~\eqref{eq:power-transform} for $j\geq2$. For $j=1$, applying $A$ to the first inverse recurrence gives
    \[
        \vv_1 = k^\S_{11} A \vv_1 + k^\S_{21} A \vv_2
    \]
    and comparison of the coefficients of $\vv_3$ and $\vv_2$, respectively, gives
    \[
        k^\S_{11}h^\S_{32}+k^\S_{21}h^\M_{32} = 0, \quad\text{ and }\quad k^\S_{11}h^\S_{22}+k^\S_{21}h^\M_{22} = 0.
    \]
    These are precisely the first two identities in~\eqref{eq:power-transform} for $j=1$.
    
    Finally, symmetry of $A$ gives, for every $j \geq 1$,
    \begin{align*}
        h^\M_{2j-1,2j}
        &= \ip{\vv_{2j-1}}{A \vv_{2j}}
        =\ip{\vv_{2j}}{A \vv_{2j-1}}
        =h^\S_{2j,2j},
    \end{align*}
    which proves the last equality in~\eqref{eq:power-transform}.  For $j \geq 2$, the same argument also yields
    \begin{equation} \label{eq:power-symmetry-outer}
        h^\S_{2j-3,2j}
        =\ip{\vv_{2j-3}}{A \vv_{2j-1}}
        =\ip{\vv_{2j-1}}{A \vv_{2j-3}}
        =h^\S_{2j-1,2j-2}.
    \end{equation}
    The other identity in~\eqref{eq:power-outer} follows from
    \begin{align*}
        h^\S_{2j-2,2j}
        &= \ip{\vv_{2j-2}}{A \vv_{2j-1}} = \ip{\vv_{2j-1}}{A \vv_{2j-2}} \\
        &= h^\M_{2j-1,2j-2} = \sigma_{j-1} h^\S_{2j-1,2j-2} = \sigma_{j-1} h^\S_{2j-3,2j},
    \end{align*}
    where the second-to-last equality follows from~\eqref{eq:power-transform} with $j-1$ in place of $j$, and the last equality follows from~\eqref{eq:power-symmetry-outer}.
\end{proof}

The proof of \Cref{thm:spd_mcil_vs_scil} crucially relies on exploiting the sign pattern of the entries of the matrices $H$ and $K$, which we establish next.

\begin{lemma} \label{lem:signs}
    Let $A$ be symmetric positive definite. Then all nonzero entries in odd columns of $K^\S$ are positive and the nonzero entries in even columns of $H^\S$ have signs
    \begin{equation*}
        h^\S_{2j-3,2j} > 0, \quad
        h^\S_{2j-2,2j} < 0, \quad
        h^\S_{2j-1,2j} > 0, \quad
        h^\S_{2j,2j} < 0, \quad
        h^\S_{2j+1,2j} > 0,
    \end{equation*}
    In particular, $\tau_j > 0$ for $j \geq 2$ and $\sigma_j < 0$ for $j \geq 1$.
\end{lemma}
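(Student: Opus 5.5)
The plan is to obtain every sign from three ingredients: positive definiteness, which makes all diagonal quadratic forms positive; the QR convention, which makes all SCIL subdiagonal entries positive; and one genuinely new fact, the signs of the extreme coefficients of the basis vectors viewed as Laurent polynomials. \Cref{lem:coefficient-relations} then transfers these signs to all remaining entries.

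\textbf{Step 1: signs available for free.} By the SCIL QR convention, $k^\S_{2j,2j-1}>0$ and $h^\S_{2j+1,2j}>0$. Because $A$ and $A^\inv$ are SPD, the following inner products are positive:
\[
k^\S_{11}=\ip{\vv_1}{A^\inv\vv_1}, \quad k^\S_{2j-2,2j-1}=\ip{\vv_{2j-2}}{A^\inv\vv_{2j-2}}, \quad h^\S_{2j-1,2j}=\ip{\vv_{2j-1}}{A\vv_{2j-1}}.
\]
For the same reason, the MCIL quantities $k^\M_{2j-1,2j-1}=\ip{\vv_{2j-1}}{A^\inv\vv_{2j-1}}$ and $h^\M_{2j,2j}=\ip{\vv_{2j}}{A\vv_{2j}}$ are positive, where all quantities refer to the common basis fixed before \Cref{thm:spd_mcil_vs_scil}.

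\textbf{Step 2: the key new ingredient.} Write $\vv_{2j-1}=p_{2j-1}(A)\vb$ and $\vv_{2j}=p_{2j}(A)\vb$, with Laurent polynomials
\[
p_{2j-1}\in\spn\{z^{-(j-1)},\dots,z^{j-1}\}, \qquad p_{2j}\in\spn\{z^{-j},\dots,z^{j-1}\}.
\]
From the SCIL recurrences with positive subdiagonals, an induction shows two things:
\begin{itemize}
\item the $z^{j-1}$ coefficient $\beta_j$ of $p_{2j-1}$ is positive, since $\vv_{2j+1}h^\S_{2j+1,2j}=A\vv_{2j-1}-\dots$;
\item the $z^{-j}$ coefficient $\alpha_j$ of $p_{2j}$ is positive, since $\vv_{2j}k^\S_{2j,2j-1}=A^\inv\vv_{2j-2}-\dots$.
\end{itemize}
Next, clear denominators. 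Set $q(z)=z^{j-1}p_{2j-1}(z)$, an ordinary polynomial of degree $2j-2$. Orthogonality of $\vv_{2j-1}$ to $\spEK_{j,j-1}$ says that $q$ is orthogonal to all polynomials of degree $<2j-2$ with respect to the positive measure $\sum_i z^{-2(j-1)}\beta_i^2\,\delta_{\lambda_i}$, where $\lambda_i>0$ are the eigenvalues of $A$ and $\beta_i$ are the components of $\vb$ in the eigenbasis. Hence $q$ is a classical orthogonal polynomial, and all its zeros lie in $(0,\infty)$. Since the degree is even and the leading coefficient is $\beta_j>0$, the constant term satisfies $c_j:=q(0)>0$. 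This $c_j$ is exactly the $z^{-(j-1)}$ coefficient of $p_{2j-1}$.

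Analogously, $z^jp_{2j}(z)$ has degree $2j-1$, all zeros positive, and constant term $\alpha_j>0$. Since the degree is odd, its leading coefficient $d_j$, which is the $z^{j-1}$ coefficient of $p_{2j}$, must be negative. I expect this step to be the main obstacle. Care is needed to check that the orthogonality conditions really translate into polynomial orthogonality for a positive weight, which requires $\lambda_i>0$. This is exactly where definiteness enters, and where the indefinite \Cref{ex:indefinite} fails.

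\textbf{Step 3: signs of $\tau_j$ and $\sigma_j$.} Comparing the extreme coefficients in the MCIL recurrences \eqref{eq:short_recurrences_mcil} gives
\[
k^\M_{2j,2j-1}=c_j/\alpha_j>0, \qquad h^\M_{2j+1,2j}=d_j/\beta_{j+1}<0.
\]
Combining these with $k^\M_{2j,2j-1}=\tau_jk^\S_{2j,2j-1}$ and $h^\M_{2j+1,2j}=\sigma_jh^\S_{2j+1,2j}$ from \Cref{lem:coefficient-relations}, and using the positive subdiagonals of Step 1, yields $\tau_j>0$ and $\sigma_j<0$.

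\textbf{Step 4: remaining entries.} \Cref{lem:coefficient-relations} now gives all the other signs:
\begin{itemize}
\item $k^\S_{2j-1,2j-1}=k^\M_{2j-1,2j-1}/\tau_j>0$;
\item $h^\S_{2j,2j}=h^\M_{2j,2j}/\sigma_j<0$;
\item $k^\S_{13}=k^\S_{21}>0$;
\item $k^\S_{2j-4,2j-1}=k^\S_{2j-2,2j-3}>0$ and $k^\S_{2j-3,2j-1}=\tau_{j-1}k^\S_{2j-4,2j-1}>0$;
\item $h^\S_{2j-3,2j}=h^\S_{2j-1,2j-2}>0$ and $h^\S_{2j-2,2j}=\sigma_{j-1}h^\S_{2j-3,2j}<0$.
\end{itemize}
Together with Step 1, this covers every nonzero entry in the odd columns of $K^\S$ and the even columns of $H^\S$. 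This establishes all the claimed signs, including $\tau_j>0$ and $\sigma_j<0$.
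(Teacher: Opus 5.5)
Your proof is correct. It shares Steps 1 and 4 with the paper, but it obtains the key signs $\tau_j>0$ and $\sigma_j<0$ by a genuinely different route.

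\textbf{The paper's route.} The paper runs a short induction that alternates between columns of $H^\S$ and $K^\S$. It uses only the definitions of $\tau_j$ and $\sigma_j$, the identities of \Cref{lem:coefficient-relations}, and positivity of Rayleigh quotients:
\begin{itemize}
\item $\sigma_1=-k^\S_{11}/k^\S_{21}<0$ is immediate from Step 1;
\item then $h^\S_{2j,2j}=h^\M_{2j,2j}/\sigma_j<0$ gives $\tau_{j+1}=-h^\S_{2j,2j}/h^\S_{2j+1,2j}>0$;
\item then $k^\S_{2j+1,2j+1}=k^\M_{2j+1,2j+1}/\tau_{j+1}>0$ gives $\sigma_{j+1}=-k^\S_{2j+1,2j+1}/k^\S_{2j+2,2j+1}<0$, and so on.
\end{itemize}
The two quotient identities in your own Step 4, combined with the definitions of $\tau_j$ and $\sigma_j$, already close this loop. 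So your Steps 2--3, which you flagged as the main obstacle, are not actually needed.

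\textbf{What your detour buys.} You get a non-inductive, structural explanation of the signs. They follow from the zeros of the orthogonal polynomials $z^{j-1}p_{2j-1}$ and $z^{j}p_{2j}$ lying in $(0,\infty)$. This makes transparent exactly where definiteness enters and links the lemma to the orthogonal Laurent polynomial viewpoint. You also get the explicit formulas $k^\M_{2j,2j-1}=c_j/\alpha_j$ and $h^\M_{2j+1,2j}=d_j/\beta_{j+1}$.

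\textbf{What it costs.} The price is reliance on the zero-location theorem for a discrete measure. Two small checks are missing:
\begin{itemize}
\item The theorem needs the polynomial not to vanish at every support point. This holds by the no-breakdown assumption, since $A^{j-1}\vv_{2j-1}\neq 0$ and $A^{j}\vv_{2j}\neq 0$.
\item You must show that $z^{j}p_{2j}$ has exact degree $2j-1$. If $d_j=0$, this polynomial would be orthogonal to itself, forcing $\vv_{2j}=0$.
\end{itemize}

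\textbf{A minor correction.} For the odd-indexed vectors, the weight $\lambda^{-2(j-1)}$ is positive regardless of definiteness. There, definiteness enters only through the location of the zeros. Only for $z^{j}p_{2j}$ does the weight $\lambda^{-(2j-1)}$ itself require $\lambda_i>0$.
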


\begin{proof}
    We prove the result by induction: It is easy to establish that the first column of $K^\S$ has positive entries: Because $A^\inv$ is positive definite, we have $k^\S_{11} = \ip{\vv_1}{A^\inv \vv_1} > 0$. Further, $k^\S_{21} > 0$  as it the norm of the second basis vector candidate after orthogonalization (which cannot be equal to zero, as we assumed no breakdown). Hence $\sigma_1 < 0$. Similarly, we have $h^\S_{12} = \ip{\vv_1}{A \vv_1} > 0$ and $h^\S_{32}>0$. Since
    \[
        h^\M_{22} = \sigma_1 h^\S_{22}
    \]
    by \Cref{lem:coefficient-relations} and $h^\M_{22} = \ip{\vv_2}{A \vv_2} > 0$, it follows that $h^\S_{22} < 0$. This concludes the base case.
    
    Suppose now that the sign pattern has been established through column $2j-2$ of $H^\S$. Then $\tau_j > 0$. The outer entries in column $2j-1$ of $K^\S$ are positive by~\eqref{eq:inverse-boundary} and~\eqref{eq:inverse-outer}. We have
    \[
        k^\S_{2j-2,2j-1} = \ip{\vv_{2j-2}}{A^\inv \vv_{2j-2}} > 0,
    \]
    as well as $k^\S_{2j,2j-1} > 0$, because it is the norm of a nonzero vector. Finally,
    \[
        k^\S_{2j-1,2j-1} = \frac{k^\M_{2j-1,2j-1}}{\tau_j} > 0,
    \]
    because $k^\M_{2j-1,2j-1} = \ip{\vv_{2j-1}}{A^\inv \vv_{2j-1}}>0$. Thus the new odd column of $K^\S$ is positive and $\sigma_j < 0$.
    
    It remains to investigate the nonzero entries of the next even column of $H^\S$. By~\eqref{eq:power-outer}, we directly have $h^\S_{2j-3,2j} > 0$ and $h^\S_{2j-2,2j} < 0$. The next entry is the Rayleigh quotient
    \[
        h^\S_{2j-1,2j} = \ip{\vv_{2j-1}}{A \vv_{2j-1}} > 0.
    \]
    Moreover,
    \[
        h^\S_{2j,2j} = \frac{h^\M_{2j,2j}}{\sigma_j} < 0,
    \]
    because $h^\M_{2j,2j} = \ip{\vv_{2j}}{A \vv_{2j}} > 0$, and $h^\S_{2j+1,2j} > 0$ is the norm of a nonzero vector, which completes the induction step and thus proves the result.
\end{proof}

As a last ingredient, we need the following scalar inequality result.

\begin{lemma} \label{lem:scalar-comparison}
    Let $a, c, x > 0$, let $b \in \spR$, and suppose
    \[
        x = c + \mu a, \qquad \mu < 0.
    \]
    Then
    \[
        a^2 + b^2 + c^2 - x^2 > 0.
    \]
\end{lemma}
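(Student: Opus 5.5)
We need to prove: given $a,c,x>0$, $b\in\spR$, and $x = c+\mu a$ with $\mu<0$, then $a^2+b^2+c^2-x^2>0$.

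Since $\mu<0$ and $a>0$, we have $\mu a<0$, so $x = c+\mu a < c$. Because $x>0$ by hypothesis, this gives $0 < x < c$, and hence $x^2 < c^2$. Therefore
\[
    a^2 + b^2 + c^2 - x^2 \;>\; a^2 + b^2 \;\geq\; a^2 \;>\; 0.
\]
The inequality is in fact strict twice over: once from $c^2 - x^2 > 0$ and once from $a^2>0$.

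The only point needing care is the step $x<c \Rightarrow x^2<c^2$. Squaring preserves order only for nonnegative quantities, so this step genuinely uses the hypothesis $x>0$. Without it, $x$ could be very negative and $x^2$ could exceed $c^2$. Apart from this, the argument is a one-line monotonicity observation.

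In the intended application to \Cref{thm:spd_mcil_vs_scil}, one would identify $a,b,c$ with coefficients of the SCIL candidate in the orthonormal basis and $x$ with the corresponding MCIL coefficient. The relation $x=c+\mu a$ with $\mu<0$ would come from \Cref{lem:coefficient-relations} together with the signs in \Cref{lem:signs}. The positivity of $x$ must then be checked from those sign patterns in each case.
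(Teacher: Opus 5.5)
Your proof is correct, but it takes a slightly different route from the paper's. The paper expands $x^2=(c+\mu a)^2$ and writes the expression as $(1-\mu^2)a^2+b^2-2\mu ac$. It then uses $x>0$ in the form $c>-\mu a$ to bound the cross term, which gives the lower bound $(1+\mu^2)a^2+b^2$. You instead note $0<x<c$ and use monotonicity of squaring on $[0,\infty)$ to get $c^2-x^2>0$ directly.

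Your route is shorter and isolates the real content, namely that $x$ lies strictly between $0$ and $c$. It is also exactly the argument the paper itself uses at step $3$ in the proof of \Cref{thm:spd_mcil_vs_scil}. There, $x=c-1/k^\S_{11}<c$ is handled without invoking the lemma.

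The paper's expansion yields the slightly stronger estimate $c^2-x^2>\mu^2a^2$. Your factorization recovers this as well: $c^2-x^2=(c-x)(c+x)=|\mu|a(c+x)>\mu^2a^2$, since $c+x>c>|\mu|a$. This extra slack is never needed downstream, where only the sign of the difference matters.
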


\begin{proof}
    Since $x > 0$, one has $c > -\mu a$.  Therefore,
    \[
        a^2 + b^2 + c^2 - x^2
        = (1 - \mu^2) a^2 + b^2 - 2 \mu ac
        > (1 + \mu^2) a^2 + b^2
        > 0.
    \]
\end{proof}

We are now in a position to prove \Cref{thm:spd_mcil_vs_scil}.

\begin{proof}[Proof of \Cref{thm:spd_mcil_vs_scil}]
    The candidate vectors at the second step coincide, so we clearly have $\rho_2^\M = \rho_2^\S$.
    
    We first consider the even steps. The case $j = 2$, corresponding to step $4$, has to be treated separately. To lighten the notation, we introduce the shorthands
    \[
        a := k^\S_{1,3}, \qquad
        c := k^\S_{2,3}, \qquad
        d := k^\S_{3,3}, \qquad
        e := k^\S_{4,3}, \qquad
        \mu_2 := \frac{h^\S_{1,2}}{h^\S_{2,2}}.
    \]
    By \Cref{lem:signs}, $a, c, d, e > 0$ and $\mu_2 < 0$. Moreover,~\eqref{eq:inverse-boundary} gives $k^\S_{2,1} = a$, while~\eqref{eq:inverse-transform} with $j = 2$ gives $k^\M_{2,3} = d$, $k^\M_{3,3} = \tau_2d$, and $k^\M_{4,3} = \tau_2e$. The relevant recurrences can therefore be written as
    \begin{align*}
        A \vv_1
        &= h^\S_{1,2} \vv_1 + h^\S_{2,2} \vv_2 + h^\S_{3,2} \vv_3,
        & A^\inv \vv_1
        &= k^\S_{11} \vv_1 + a \vv_2,\\
        A^\inv \vv_2
        &= a \vv_1 + c \vv_2 + d \vv_3 + e \vv_4,
        & A^\inv \vv_3
        &= d \vv_2 + \tau_2d \vv_3 + \tau_2 e \vv_4.
    \end{align*}
    Equating the coefficient of $\vv_2$ in $\vv_1 = A^\inv (A \vv_1)$ yields
    \[
        h^\S_{1,2} a + h^\S_{2,2} c + h^\S_{3,2} d = 0.
    \]
    Since $\tau_2=-h^\S_{2,2}/h^\S_{3,2}$, we obtain
    \begin{equation} \label{eq:even-boundary-reduction}
        \frac{d}{\tau_2} = c + \mu_2 a.
    \end{equation}
    For SCIL, the candidate vector at step $4$ is $A^\inv \vv_2$, and the vector remaining after its orthogonalization is $e \vv_4$. For MCIL, the candidate vector is $A^\inv \vv_3$, and the vector remaining after its orthogonalization is $\tau_2 e \vv_4$. It follows that
    \[
        (\rho_4^\S)^{-2} = 1 + \frac{a^2 + c^2 + d^2}{e^2}, \qquad (\rho_4^\M)^{-2} = 1 + \frac{(d/\tau_2)^2 + d^2}{e^2}.
    \]
    Applying \Cref{lem:scalar-comparison} to~\eqref{eq:even-boundary-reduction}, with $x = d/\tau_2$ and $b = 0$, gives
    \[
        (\rho_4^\S)^{-2} - (\rho_4^\M)^{-2} = \frac{a^2 + c^2 - (d/\tau_2)^2}{e^2} > 0.
    \]
    Consequently, $\rho_4^\M > \rho_4^\S$.
    
    We next consider an even step $2j$ with $j \geq 3$. To simplify the notation, we introduce 
    \[
        a := k^\S_{2j-4,2j-1}, \,\, 
        b := k^\S_{2j-3,2j-1}, \,\,
        c := k^\S_{2j-2,2j-1}, \,\,
        d := k^\S_{2j-1,2j-1}, \,\,
        e := k^\S_{2j,2j-1}.
    \]
    The SCIL recurrence~\eqref{eq:short_recurrences_scil} and~\eqref{eq:inverse-transform} give
    \begin{equation} \label{eq:even-candidates}
        \begin{aligned}
            A^\inv \vv_{2j-2} &= a \vv_{2j-4} + b \vv_{2j-3} + c \vv_{2j-2} + d \vv_{2j-1} + e \vv_{2j},\\
            A^\inv \vv_{2j-1} &= d \vv_{2j-2} + \tau_j d \vv_{2j-1} + \tau_j e \vv_{2j}.
        \end{aligned}
    \end{equation}
    Define
    \begin{equation} \label{eq:mu}
        \mu_j = \frac{h^\S_{2j-4,2j-2} + \tau_{j-1} h^\S_{2j-3,2j-2}}{h^\S_{2j-2,2j-2}}.
    \end{equation}
    As in the proof of \Cref{lem:coefficient-relations}, we apply $A^\inv$ to the SCIL recurrence~\eqref{eq:short_recurrences_scil} for $A\vv_{2j-3}$ and equate the coefficient of $\vv_{2j-2}$. By~\eqref{eq:inverse-outer}, the coefficient of $\vv_{2j-2}$ in $A^\inv \vv_{2j-4}$ is $a$. By~\eqref{eq:inverse-transform} at index $j-1$, the corresponding coefficient in $A^\inv \vv_{2j-3}$ is $\tau_{j-1} a$. Finally, the coefficients of $\vv_{2j-2}$ in $A^\inv \vv_{2j-2}$ and $A^\inv \vv_{2j-1}$ are $c$ and $d$, respectively. Hence
    \[
        \bigl( h^\S_{2j-4,2j-2} + \tau_{j-1} h^\S_{2j-3,2j-2} \bigr) a + h^\S_{2j-2,2j-2} c + h^\S_{2j-1,2j-2}d = 0.
    \]
    Using $h^\S_{2j-1,2j-2} = -h^\S_{2j-2,2j-2} / \tau_j$ and~\eqref{eq:mu}, this becomes
    \begin{equation} \label{eq:even-reduction}
        \frac{d}{\tau_j} = c + \mu_j a.
    \end{equation}
    It remains to determine the sign of $\mu_j$. Equating the coefficient of $\vv_{2j-3}$ in the same vector identity gives
    \[
        \bigl( h^\S_{2j-4,2j-2} + \tau_{j-1} h^\S_{2j-3,2j-2} \bigr) k^\S_{2j-3,2j-3} + h^\S_{2j-2,2j-2}b = 1.
    \]
    By \Cref{lem:signs}, we have $k^\S_{2j-3,2j-3} > 0$, as well as $h^\S_{2j-2,2j-2} < 0$, and $b > 0$. Thus, the second term on the left-hand side is negative, so the factor in parentheses is positive. Its quotient with $h^\S_{2j-2,2j-2}$ is therefore negative, i.e., $\mu_j < 0$.
    
    The residual coefficients in the two recurrences in~\eqref{eq:even-candidates} are $e$ and $\tau_j e$, respectively. Consequently,
    \[
        (\rho_{2j}^\S)^{-2} = 1 + \frac{a^2 + b^2 + c^2 + d^2}{e^2}, \qquad
        (\rho_{2j}^\M)^{-2} = 1 + \frac{(d / \tau_j)^2 + d^2}{e^2}.
    \]
    Since $d/\tau_j > 0$, \Cref{lem:scalar-comparison} applied to~\eqref{eq:even-reduction} yields
    \[
    (\rho_{2j}^\S)^{-2} - (\rho_{2j}^\M)^{-2} = \frac{a^2 + b^2 + c^2 - (d/\tau_j)^2}{e^2} > 0,
    \]
    and hence $\rho_{2j}^\M > \rho_{2j}^\S$ for every $j \geq 3$.
    
    We now turn to the odd steps. We separately treat the case $j = 1$, which corresponds to step $3$. Set
    \[
        c = h^\S_{1,2}, \quad
        d = h^\S_{2,2}, \quad
        e = h^\S_{3,2}, \quad
        x = \frac{d}{\sigma_1}.
    \]
    By \Cref{lem:signs}, we have $c > 0$, $e > 0$, $d < 0$ and $\sigma_1 < 0$, so that $x > 0$. The SCIL recurrence~\eqref{eq:short_recurrences_scil} and~\eqref{eq:power-transform} give
    \[
        A \vv_1 = c \vv_1 + d \vv_2 + e \vv_3, \qquad
        A \vv_2 = d \vv_1 + \sigma_1 d \vv_2 + \sigma_1 e \vv_3.
    \]
    Applying $A$ to $A^\inv \vv_1 = k^\S_{11} \vv_1 + k^\S_{21} \vv_2$ and equating the coefficient of $\vv_1$ gives
    \[
        1 = k^\S_{11} c + k^\S_{21} d.
    \]
    Since $\sigma_1 = -k^\S_{11} / k^\S_{21}$, we obtain
    \[
    x = \frac{d}{\sigma_1} = c - \frac{1}{k^\S_{11}} < c.
    \]
    The residual coefficients of the SCIL and MCIL candidate vectors are $e$ and $\sigma_1e$, respectively. Therefore,
    \[
        (\rho_3^\S)^{-2} = 1 + \frac{c^2 + d^2}{e^2}, \qquad
        (\rho_3^\M)^{-2} = 1 + \frac{x^2 + d^2}{e^2},
    \]
    and thus
    \[
    (\rho_3^\S)^{-2} - (\rho_3^\M)^{-2} = \frac{c^2 - x^2}{e^2} > 0.
    \]
    This proves $\rho_3^\M > \rho_3^\S$.
    
    Finally, consider an odd step $2j+1$ with $j \geq 2$, and set
    \[
        a = h^\S_{2j-3,2j}, \quad
        b = h^\S_{2j-2,2j}, \quad
        c = h^\S_{2j-1,2j}, \quad
        d = h^\S_{2j,2j}, \quad
        e = h^\S_{2j+1,2j}
    \]
    as well as
    \[
        q = k^\S_{2j-3,2j-1}, \qquad
        r = k^\S_{2j-2,2j-1}, \qquad
        s = k^\S_{2j-1,2j-1}, \qquad
        t = k^\S_{2j,2j-1}.
    \]
    Thus, $q,r,s,t$ are the coefficients of $\vv_{2j-3}, \vv_{2j-2}, \vv_{2j-1}, \vv_{2j}$, respectively, in the SCIL recurrence~\eqref{eq:short_recurrences_scil} for $A^\inv \vv_{2j-2}$. If $j \geq 3$, this recurrence contains the additional term $k^\S_{2j-4,2j-1} \vv_{2j-4}$, which cannot contribute to either of the coefficient comparisons below. By~\eqref{eq:sigma}, we have $t = -s/\sigma_j$. Moreover, the SCIL recurrence~\eqref{eq:short_recurrences_scil} and~\eqref{eq:power-transform} give
    \begin{equation} \label{eq:odd-candidates}
        \begin{aligned}
            A \vv_{2j-1} &= a \vv_{2j-3} + b \vv_{2j-2} + c \vv_{2j-1} + d \vv_{2j} + e \vv_{2j+1},\\
            A \vv_{2j}   &= d \vv_{2j-1} + \sigma_j d \vv_{2j} + \sigma_j e \vv_{2j+1}.
        \end{aligned}
    \end{equation}
    Define
    \begin{equation} \label{eq:nu}
        \nu_j = \frac{q + \sigma_{j-1} r}{s}.
    \end{equation}
    We apply $A$ to the SCIL recurrence for $A^\inv \vv_{2j-2}$. By~\eqref{eq:power-transform} at index $j-1$ and~\eqref{eq:power-outer},
    \[
        h^\M_{2j-2,2j-2} =\sigma_{j-1} h^\S_{2j-2,2j-2}, \qquad
        h^\S_{2j-1,2j-2} = a, \qquad
        h^\M_{2j-1,2j-2} = \sigma_{j-1} a.
    \]
    Equating the coefficient of $\vv_{2j-2}$ in $\vv_{2j-2} = A (A^\inv \vv_{2j-2})$ gives
    \begin{equation} \label{eq:proof_aux_1}
        (q + \sigma_{j-1} r) h^\S_{2j-2,2j-2} + s b = 1.
    \end{equation}
    By \Cref{lem:signs}, we have $h^\S_{2j-2,2j-2} < 0$, $b < 0$, and $s > 0$. Hence $sb < 0$, so the first term on the left-hand side of~\eqref{eq:proof_aux_1} is positive. It follows that $q + \sigma_{j-1} r < 0$, and therefore $\nu_j < 0$. Equating the coefficient of $\vv_{2j-1}$ in the same vector identity gives
    \[
        (q + \sigma_{j-1} r) a + sc + td = 0.
    \]
    Substituting $t = -s / \sigma_j$ and using~\eqref{eq:nu} yields
    \begin{equation} \label{eq:odd-reduction}
        \frac{d}{\sigma_j} = c + \nu_j a.
    \end{equation}
    By \Cref{lem:signs}, we have $d / \sigma_j > 0$. The residual coefficients in the two recurrences in~\eqref{eq:odd-candidates} are $e$ and $\sigma_je$, respectively. Hence
    \[
        (\rho_{2j+1}^\S)^{-2} = 1 + \frac{a^2 + b^2 + c^2 + d^2}{e^2}, \qquad
        (\rho_{2j+1}^\M)^{-2} = 1 + \frac{(d/\sigma_j)^2 + d^2}{e^2}.
    \]
    Applying \Cref{lem:scalar-comparison} to~\eqref{eq:odd-reduction} gives
    \[
    (\rho_{2j+1}^\S)^{-2} - (\rho_{2j+1}^\M)^{-2} = \frac{a^2 + b^2 + c^2 - (d/\sigma_j)^2}{e^2}>0.
    \]
    Consequently, $\rho_{2j+1}^\M > \rho_{2j+1}^\S$ for every $j \geq 2$, which completes the proof.
\end{proof}

\section*{Acknowledgments}
Lund would like to acknowledge funding from the London Mathematical Society's Research in Pairs program.  This funding covered Lund's visits to the University of Wuppertal in October 2025 and February 2026. The authors also benefited from discussions at meetings of ``The $f(A)b$ulous scientific network on matrix functions and exponential integrators'' funded by DFG grant 566049107.

Lund used Claude Sonnet 4.6 and Opus 4.8 to review code, debug, and create issues in \texttt{ektoolbox}; furthermore, some high-level routines were generated completely by Claude.  All code has been reviewed by humans.  Claude was also used to help with the LaTeX formatting of this document.

\bibliographystyle{siam}
\bibliography{references}
\end{document}